\begin{document}

\mainmatter              
\title{Sampling bipartite graphs with given vertex degrees and fixed edges and non-edges}

\titlerunning{Sampling bipartite graphs}
\author{Annabell Berger}
\institute{Martin Luther University Halle-Wittenberg, Halle (Saale), Germany\\
\email{annabell.berger@informatik.uni-halle.de}}
\authorrunning{A. Berger}
\maketitle

\begin{abstract}
We consider the problem of sampling a bipartite graph with given vertex degrees where a set $F$ of edges and non-edges which need to be contained is predefined. Our general result shows that the repeated swap of edges and non-edges in alternating cycles of at most size $2\ell-2$ ('$j$-swaps' with $j \leq 2 \ell-2$) in a current graph lead to an ergodic Metropolis Markov chain whenever $F$ does not contain a cycle of length $2 \ell$ with $\ell \geq 4.$ This leads to useful Markov chains whenever $\ell$ is not too large. If $F$ is a forest, $4$- and $6$-swaps are sufficient. Furthermore, we prove that $4$-swaps are sufficient when $F$ does not contain a matching of size $3.$ We extend the Curveball algorithm of Strona et al. \cite{Strona2014b} to our cases.
\end{abstract}

\section{Introduction}
\paragraph{The problem of sampling a bipartite graph for given vertex degrees} occurs in many fields and ways under several names. In the classical ecological book of Gotelli \cite{gotelli1996null} randomly sampled graphs with fixed degrees are proposed as null model for ecological networks (food webs, pollinator-plant networks, occurrence networks). They are used to answer questions as to how likely a link is between two species within a real network. Does a network's characteristics result from a hidden biological mechanism, or, does it occur by chance in such networks? One of the most common approaches is to compute an approximate sample with the use of a Metropolis Markov chain which is known as \emph{swap chain} or \emph{switching chain}. In the swap chain, one has to apply $t$ times the following procedure; either swap the ends of two non-adjacent edges if it is possible, or, stay in the current graph. It is not possible to swap the edge ends if there exists already one of the possible new generated edges. Exchanging two edge ends cannot change the vertex degrees of incident vertices, and hence generates a new graph with the same vertex degrees. We call such graphs \emph{realisations}. This approach is easy to implement, and the idea of swaps traces back to Petersen~\cite{Petersen1891}, in the 19th century. The swap chain itself was introduced by Diaconis and Gangolli~\cite{Diaconis1995}. The number $t$ of steps is called \emph{mixing time}, and describes 'how well all realisations are mixed' after $t$ steps. More precisely, the \emph{variation distance} between the probability distribution at step $t$, and the uniform distribution can not exceed a given $\epsilon$, and depends on the instance size (consider for more information the book of Levin et al.~\cite{LevinPeresWilmer2006}).\\
However, it seems to be quite hard to prove a \emph{rapid mixing time} for the swap chain, i.e. $t$ can be bounded by a polynomial which depends on the input size of the graph (edge number, vertex number) and $\epsilon$. Those upper bounds were proven for \emph{half-regular graphs} and graphs with bounded vertex degrees see~\cite{Kannan97,Miklos13,Greenhill:2015,Erdos16}. A bipartite graph $G=(U,V,E)$ is half-regular if the vertex degrees in $U$ or $V$ are identical. The general mixing time for the swap chain is still open, and the mixing times for the solved sub-classes are much too high to be applied. On the other hand, approximate sampling of realisations was proven to be rapid by Bez{\'a}kov{\'a} et al.~\cite{BezakovaBhatnagarVigoda07}. They extended the idea of Jerrum et al.~\cite{JerrumSinclairVigoda04} for sampling perfect matchings. However, all these polynomial bounds have only the theoretical value to show that there exists a polynomial approximate sampler. The exponents of these polynomials are much too high for any practical use. The reason seems to be a lack in our proving techniques. Moreover, many people believe that the swap chain is quite fast. Rechner et al~\cite{Rechner2016} found some experimental evidence for this assumption.

\paragraph{The Curveball chain} is a new Metropolis Markov chain to generate approximate samples at random. The idea of Strona et al~\cite{Strona2014b} was to apply several swaps for edges of two vertices at the same time. They had the nice idea to map, in a bipartite graph $G=(U,V,E),$ one vertex set $V$ to some kids, and the other vertex set $U$ to baseball cards. Edge set $E$ describes which cards are owned by which kid. A swap corresponds then to two kids, which exchange two cards, which they do not have in common. The natural question came up, why do they not change more cards at the same time. Intuitively, this must mix all cards much faster. More formally, the Curveball algorithm repeats the following steps $t$ times; (a) choose two adjacency lists $A_i$ and $A_j$ uniformly at random, (b) determine sets $A_{i-j}:=A_i \setminus A_j$ and $A_{j-i}$, and choose a subset $B_{i-j}$ of $A_{i-j} \cup A_{j-i}$ of size $|A_{i-j}|$ at random, denote the remaining subset by $B_{j-i},$ (c) replace $A_{i-j}$ in $A_i$ by $B_{i-j}$, and $A_{j-i}$ in $A_j$ by $B_{j-i}.$ Step (c) is called \emph{trade}, and can correspond to a swap if $A_{i-j}$ and $B_{i-j}$ only differ in one element. Carstens~\cite{CarstensPhysRevE} proved that this algorithm samples a realisation uniformly at random if $t \mapsto \infty.$ The theoretical status of the mixing times in the Curveball chain is open. Nevertheless, the proving techniques seem to work even worse for proving a rapid mixing time than in the swap chain. On the other hand the Curveball algorithm seems to mix much faster in experiments of Strona et al than the swap chain~\cite{Strona2014b}. Please observe that this new algorithm can apply exponentially many possible trades on one realisation whereas the swap chain can only use polynomially many swaps in one realisation. Intuitively, it is clear that more realisations can be achieved in less steps of the Curveball algorithm.\\
Independently, a similar approach was proposed by Verhelst~\cite{Verhelst2008}. In his paper, the Curveball algorithm is described briefly (in a complicated way) as an extension of his non-uniform sampling algorithm. However, no proof for its correctness is given. Instead, he proposes a Metropolis-Hasting version which basically avoids to consider adjacency list pairs where no trade is possible. Intuitively, this approach leads to a smaller mixing time than the Curveball algorithm because one has to stay in a current realisation less frequently. On the other hand the asymptotic complexity of one step in Verhelst`s algorithm is much higher than in the Curveball algorithm. It is questionable (or unclear) if the smaller mixing time can balance out the calculation effort for one step. In our paper we focus on the Curveball algorithm.\\
We do not want to focus in this paper on the problem to determine good upper bounds for the mixing time. Instead, we consider an extension of the problem. Consider for example a plant-pollinator network describing which pollinator pollinates which plant. Then it is clear that some edges can never occur because a bee is too large for a flower, or, their activity time periods are different. Hence, a more realistic null model would fix some non-edges and edges in a given realisation, and only search for random samples in this subclass of graphs. 

\paragraph{Sampling of $F$-fixed bipartite graphs with fixed degree sequence} We define the problem of sampling a bipartite graph with fixed degree sequence where a set $F_E$ of edges and a set $F_N$ of non-edges is already fixed. 

\begin{definition}[$F$-fixed realisation]\label{def:F_fixed_realisation}
We call a labeled bipartite graph $G=(V,U,E)$ without parallel edges and loops an \emph{$F$-fixed realisation of integer sequence $S=(a_1,\dots,a_n),(b_1,\dots,b_{n'})$} with $a_i,b_i>0$ for set $F:= F_E \cup F_N$ if and only if
\begin{enumerate}
\item $d_G(v_i)=a_i$ for all $v_i \in V$, $d_G(u_i)=b_i$ for all $u_i \in U$, and 
\item $F_E \subset E$ and $F_N \cap E = \emptyset$.
\end{enumerate}
\end{definition}

A special version of this problem is that all elements in $F$ are non-edges. This problem is the classical \emph{bipartite f-Factor problem}. Finding a suitable $f$-Factor can be done in polynomial time. For an overview consider the book of Schrijver~\cite{schrijver2003}. However, sampling a suitable $f$-factor has not received much attention. Let us denote the symmetrical difference of the edge sets $E(G)$ and $E(G')$ of two $F$-fixed realisations $G$ and $G'$ by $G \triangle G':=E(G) \triangle E(G').$ Since $G$ and $G'$ possess the same vertex degrees, $G \triangle G'$ decomposes in \emph{alternating cycles of $G$ and $G'$}, i.e. each cycle $C_i$ consists alternately of edges from $G$ and $G'.$ A cycle $C_i$ is not necessarily vertex disjoint since both graphs can be different regarding several adjacent edges of a vertex. Generally we define a cycle with edges alternately in $G$ and not in $G$ by \emph{alternating cycle of $G$}. That is in these cases we do not say specifically where the non-edges are from. Deleting all edges in $G$ which belong to its alternating cycle and introducing all non-edges in this cycle as edges, leads to a new realisation $G^{*}.$ We call the processing of one alternating cycle of $G$ to achieve another realisation $G^{*}$ \emph{cycle swap}.\\
Please observe that cycle swaps can never contain edges or non-edges of $F$ since they are fixed in each realisation and so not in the symmetrical difference. It is not important if an element of $F$ is an edge or an non-edge. That is we can treat each $F$-fixed problem like an $f$-factor problem and vice versa. Recall that a cycle swap, which is not vertex disjoint, decomposes in vertex disjoint, alternating cycles of $G$ since every vertex must possess the same number of adjacent edges in $E(G)$ and $E(G').$ We call cycle swaps of length $k,$ which are vertex disjoint, \emph{$k$-swap}.  Hence, each realisation $G'$ can be achieved by repeated $k$-swaps of lengths $4 \leq k \leq 2 \min \{|U|,|V|\}.$ For an example consider Figure~\ref{fig:swap_of_alternating_cycles}.

\begin{figure}
\label{fig:swap_of_alternating_cycles}
 \centering
 \includegraphics[scale=0.3]{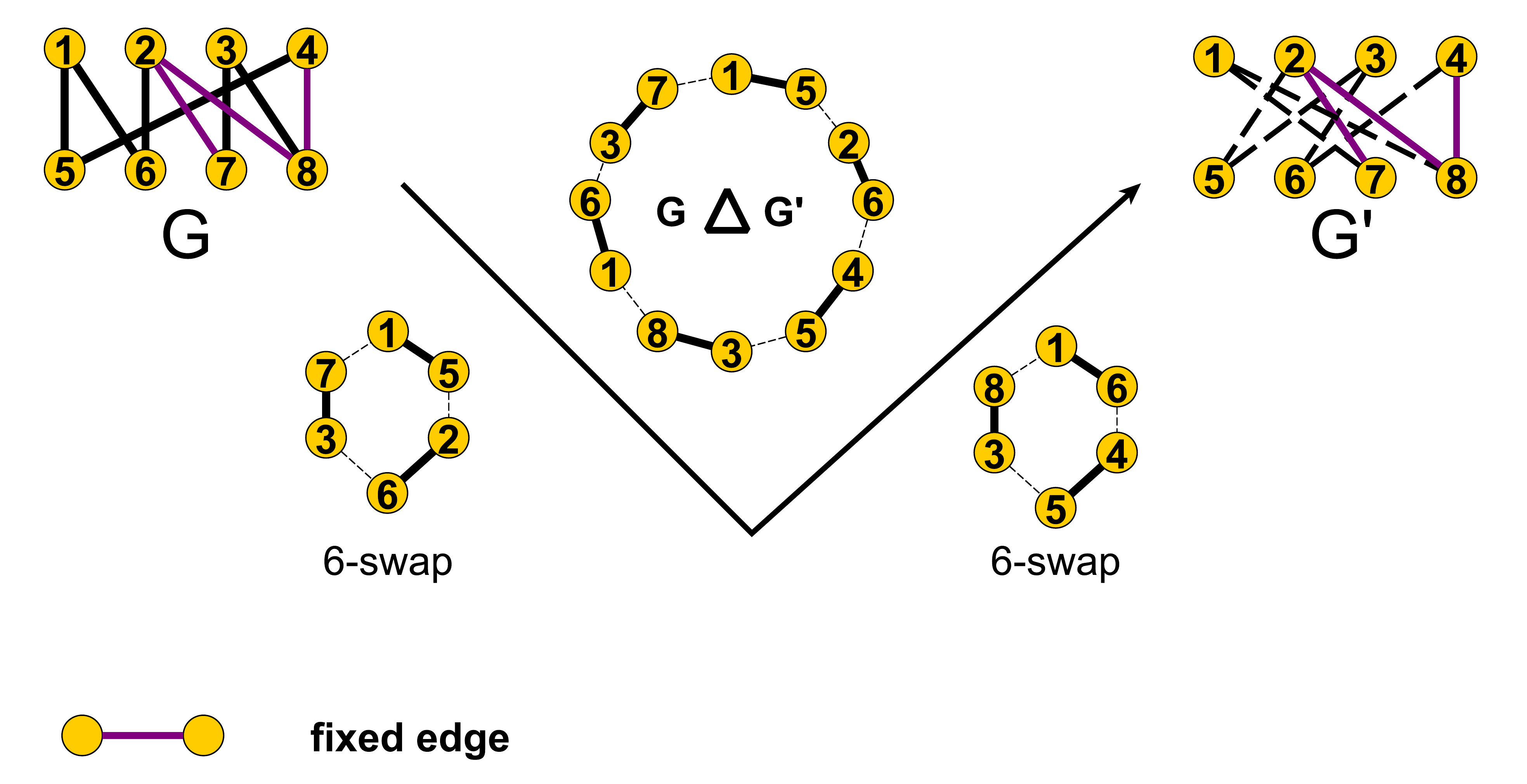}
\caption{Decomposing of cycle swap $G \triangle G'$ in $k$-swaps with $k=6.$ Purple edges are in $F.$ }
\end{figure}

 This result was first given by Erd\H{o}s et al.~\cite{Erdoes15} for $f$-Factors. Unfortunately, this insight is not applicable in a sampling algorithm since finding all alternating cycles in a given $f$-Factor $G$ contains the NP-complete decision problem to decide if a graph possesses a Hamiltonian cycle. However, Erd\H{o}s et al.~\cite{Erdoes15} also give for a very special $f$-Factor an ergodic sampling chain where the forbidden non-edges represent a perfect matching, and a star on one vertex. Their Markov chain is ergodic when they use $4$-swaps and $6$-swaps. We improve these results in several ways.

\paragraph{Our contribution} We not only consider $f$-Factor problems but problems where the fixed set $F$ consists of edges and non-edges. Furthermore, we refine the general result of Erd\H{o}s et al. and show that it is sufficient to use $k$-swaps where $4 \leq k \leq 2 \ell -2$ if set $F$ does not contain a cycle of length $2 \ell$ where $\ell \geq 4.$ For the case where $\ell=4,$ we find that we can use $6$-swaps and $4$-swaps to yield an ergodic Markov chain whenever set $F$ does not contain a cycle of length $8.$ This leads, for example, to an applicable ergodic chain for bipartite graphs if, $F$ is a forest. Moreover, we extend the Curveball algorithm to a version which applies several $6$-swaps at the same time. For the special case where $F$ does not contain a matching of size three we prove that the use of $4$-swaps lead to an ergodic Markov chain. We show that the classical swap chain or the Curveball algorithm ignoring the edges of $F$ is an ergodic Markov chain. Lastly, we generalize all these results using a preprocessing step which calculates the set $F'$ of edges and non-edges for a fixed degree sequence which occur in \textbf{each} realisation. Given an  $F$-fixed problem we can delete in $F$ all elements which occur in $F'$ since each $F$-fixed realisation is also an $F\setminus F'$-fixed realisation and vice versa. This approach improves the chance that the the upper bound $2 \ell-2$ for suitable $k$-swaps is small enough to find them efficiently. The reason is that there is a certain chance that $F \setminus F'$ does not contain a cycle of length $2 \ell$ which occurs in $F.$ 

\paragraph{Structure} The remainder of this article is structured as follows. In the next section we introduce a Curveball algorithm and swap chain for sampling $F$-fixed realisations for a fixed set $F$ which does not contain a matching of size three. In the second part of this paper, we propose a Curveball algorithm with cycle trades which allows us to apply several $6$-swaps at the same time. We prove that this algorithm can be used for all problems where $F$ does not contain a cycle of length $8.$ Furthermore, we generalize this result for all $F$-fixed problems. In the summery we give instruction on how to come from a given $F$-fixed problem to a suitable sampling algorithm.

\section{$F$-fixed problems with swaps}\label{sec:swaps}

The following result allows us to construct an ergodic Markov chain which is based on $4$-swaps which exclude edges from $F$. Specifically, we show that the symmetrical difference $G \triangle G'$ decomposes in a series of $4$-swaps whenever the fixed set $F$ does not contain matchings of size $3.$

\begin{theorem}\label{theorem:connectedness_F_Fixed_realizations_swaps}
Let $G=(V,U,E)$ and $G'=(V,U,E')$ be two different $F$-fixed realizations of a sequence $S$ where the fixed set $F:=F_E \cup F_N$  does not contain a matching of size $3$. Then there exist $F$-fixed realisations $G_1,\dots,G_k$ with $G_1:=G$, $G_k:=G'$ and $|G_i \triangle G_{i+1}|=4$ where $G_i \triangle G_{i+1}$ corresponds to a $4$-swap of $G_i$ and $k\leq \frac{1}{2}|G \triangle G'|$.
\end{theorem}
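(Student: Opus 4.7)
My plan is to induct on $|G \triangle G'|$, with the base case $G = G'$ trivial. For the inductive step, I decompose $G \triangle G'$ into its alternating cycles and handle one cycle $C = x_1 x_2 \cdots x_{2m} x_1$ at a time, where $(x_{2j-1}, x_{2j}) \in E(G) \setminus E(G')$ and $(x_{2j}, x_{2j+1}) \in E(G') \setminus E(G)$. The goal is to show that $C$ can be eliminated using at most $m - 1$ consecutive $F$-fixed $4$-swaps, after which the inductive hypothesis handles the remaining cycles. Summing $\sum (m_i - 1)$ over the cycle decomposition gives at most $\tfrac{1}{2}|G \triangle G'| - 1$ swaps, hence $k \leq \tfrac{1}{2}|G \triangle G'|$.

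If $m = 2$, then $C$ is already an alternating $4$-cycle and all four of its edges lie in $G \triangle G'$; hence none belongs to $F$, and the corresponding $4$-swap is a valid $F$-fixed swap that eliminates $C$ in one step.

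For $m \geq 3$, I use a chord-based reduction. For each $i$, the chord $(x_i, x_{i+3})$ together with the three cycle edges $(x_i, x_{i+1}), (x_{i+1}, x_{i+2}), (x_{i+2}, x_{i+3})$ closes a $4$-cycle on four vertices; a short parity check shows it is alternating in $G$ exactly when the chord is in the \emph{correct} state, i.e., a non-edge of $G$ if $(x_i, x_{i+1}) \in G$ and an edge of $G$ otherwise. When alternating, the associated $4$-swap is $F$-fixed provided the chord itself is outside $F$, and it reduces $C$ to an alternating cycle of length $2m - 2$. The central claim is that under the no-matching-of-$3$ hypothesis at least one chord is always usable. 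The proof is transparent for $m = 3$: the three chords $(x_1, x_4), (x_2, x_5), (x_3, x_6)$ are pairwise vertex-disjoint, and each is in the correct state for one of the two $4$-cycles it closes (both arcs of a $6$-cycle have length $4$); thus each chord is usable unless it lies in $F$, and all three being in $F$ would already produce a matching of size $3$ in $F$, contradicting the hypothesis.

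The main obstacle is the case $m \geq 4$, where a chord closes only one short-arc $4$-cycle and a chord in the wrong state yields no candidate. To handle this case, I would enlarge the pool of candidates by including \emph{two-chord} $4$-swaps on vertex sets $\{x_{2j-1}, x_{2j}, x_{2k-1}, x_{2k}\}$ with $|j-k| \geq 2 \pmod m$, which remove two disjoint cycle edges and add two chords, splitting $C$ into two strictly shorter alternating cycles rather than shortening it by $2$. A careful enumeration of the chord candidates and the two-chord candidates, combined with a counting argument on their blockers in $F$, would show that if every candidate were blocked---either by the wrong chord state (for chord candidates) or by a blocking edge lying in $F$ (for both types)---then the collection of blockers in $F$ would itself contain a vertex-disjoint matching of size $3$, contradicting the hypothesis. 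Identifying the right family of candidates and verifying that their blockers are forced into a size-$3$ matching is the technical heart of the argument.
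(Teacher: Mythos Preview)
Your argument for $m \leq 3$ matches the paper's, but the $m \geq 4$ case has a genuine gap: the two-chord/blocker scheme is only sketched, and---more to the point---it is unnecessary. The observation you are missing is that a distance-$3$ chord $\{x_i, x_{i+3}\} \notin F$ is \emph{always} usable, regardless of its state. If it is a non-edge of $G$, the short $4$-cycle $(x_i, x_{i+1}, x_{i+2}, x_{i+3}, x_i)$ is alternating in $G$ and one $4$-swap from $G$ shortens $C$ by two. If it is an edge of $G$ (your ``wrong state''), then, because the chord lies outside $G \triangle G'$, it is also an edge of $G'$, and the very same short $4$-cycle is alternating in $G'$; so perform the $4$-swap from the $G'$ side instead and shorten $C$ from that end. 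Either way the symmetric difference drops by two and induction applies. (Equivalently, in the wrong-state case the long arc of $C$ together with the chord is an alternating cycle of $G$ of length $2m-2$; the paper phrases it this way, introducing an intermediate realisation $G^*$ and invoking the induction hypothesis on $|G \triangle G^*| = 2m-2$, with the final step $G^* \to G'$ being the short $4$-swap.)

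With this observation the six-vertex window you used for $m=3$ works verbatim for all $m \geq 3$: take six consecutive vertices $x_1,\ldots,x_6$ on $C$ (the paper first checks that any five consecutive vertices on an alternating walk in $G\triangle G'$ are distinct; if $x_1,\ldots,x_6$ are not, then $x_1=x_5$ or $x_2=x_6$ and $C$ already splits at the repeated vertex into a $4$-cycle and a shorter alternating cycle). The three chords $\{x_1,x_4\},\{x_2,x_5\},\{x_3,x_6\}$ form a matching, so at least one avoids $F$, and that chord finishes the inductive step. No enlargement of the candidate pool, no two-chord swaps, and no blocker-counting are needed.
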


\begin{proof}
\emph{Each alternating walk of length three in $G \triangle G'$ must be vertex-disjoint} since a) two vertices on such a walk with distance one cannot be identical since loops are forbidden, b) two vertices on such a walk with distance two cannot be identical otherwise it contradicts the definition of a symmetrical difference, c) two vertices on this cycle with distance three cannot be identical because in bipartite graphs one of those vertices belongs to $U$ and the other one to $V.$ We prove the result with induction by size $\kappa$ where $|G \triangle G'|=2\kappa.$\\
For $\kappa=2$ the result is true setting $G_1:=G$, $G_k:=G'$ and $k=2.$ For $\kappa=3$ we get an alternating cycle of length $6.$ This cycle is vertex-disjoint because each walk of length three on this cycle is vertex-disjoint as proven above and two vertices on such a cycle have at most a distance of three. We denote this cycle by $C=(v_1,v_2,v_3,v_4,v_5,v_6,v_1)$ with $\{v_1,v_2\},\{v_3,v_4\},\{v_5,v_6\}\in E(G)$ and $\{v_2,v_3\},\{v_4,v_6\},\{v_6,v_1\} \in E(G').$ There must be a vertex pair $\{v_i,v_j\} \notin F$ which connects an alternating $3$-path in $C.$  Otherwise, we find the matching $\{v_1,v_4\},\{v_2,v_5\},\{v_3,v_6\}$ of size $3$ in $F.$ Without lost of generality we assume that this pair is $\{v_1,v_4\}.$ Then we find either $\{v_1,v_4\} \in E(G) \cap E(G')$ or $\{v_1,v_4\} \notin E(G) \cup E(G').$ The first case leads to $4$-swap $C':=(v_1,v_4,v_5,v_6,v_1).$ Its application leads to $F$-fixed realization $G^*$ where $G^* \triangle G'$ is the $4$-swap $(v_1,v_2,v_3,v_4,v_1).$ Setting $G_1:=G$, $G_2:=G^*$ and $G_3:=G'$ leads to our assumption. The second case with $\{v_1,v_4\} \notin E(G) \cup E(G')$ can be treated analogously by first swapping the alternating cycle $C':=(v_1,v_2,v_3,v_4,v_1)$ and then $C'':=(v_1,v_4,v_5,v_6,v_1)$. Furthermore, $k=3=\frac{1}{2}|G \triangle G'|.$\\
Let us now assume the claim is correct for all $G$ and $G'$ with $|G \triangle G'|=2 \kappa$ where $\kappa \leq n.$ We prove the correctness for $|G \triangle G'|=2 n+2$. $G \triangle G'$ is an Eulerian graph decomposing in alternating cycles. We assume that $G \triangle G'$ is connected. Otherwise we can apply the induction hypothesis on each of the cycles separately and are done. We denote this alternating cycle by $C=(v_1,\dots,v_{2n+2},v_1)$ with edge $\{v_1,v_2\} \in E(G).$ We consider the alternating sub-walk  $P=(v_1,\dots,v_6)$ of $C.$ \emph{If $P$ is not vertex-disjoint} it can happen that we get in $P$ only $5$ different vertices; either $v_1=v_5$ or $v_2=v_6.$ Less than this is not possible since each path of length three must be vertex-disjoint as proven above. Without lost of generality we consider $v_1=v_5.$ Then $C$ decomposes in alternating cycles $C_1=(v_1,\dots,v_4,v_1)$ and $C_2=(v_1,v_6,\dots,v_{2n+2},v_1)$ of lengths $\ell_1=4$ and $\ell_2=2n-2.$ We apply $4$-swap $C_1$ on $G$ and yield $G^*.$ $G^* \triangle G'$ corresponds to $C_2.$ We apply induction hypothesis on $|G^* \triangle G'|$ and get sequence of $F$-fixed realisations $G_1,\dots,G_k$ with $G_1:=G$ and $G_k:=G^*$ with $k \leq n-1.$ Hence, $G,G_1,\dots,G_k$ is the required sequence of length $n$ for $G$ and $G'.$ \emph{If $P$ is a vertex-disjoint walk} there must be a vertex pair $\{v_i,v_j\}\notin F$ such that there is a path of length three from $v_i$ to $v_j$ on $P.$ Otherwise we find $(v_1,v_4),(v_2,v_5),(v_3,v_6) \in F$ which is a matching of size three in $F$ contradicting our assumption. 
Without lost of generality we assume that $e=\{v_1,v_4\} \notin F.$ Hence, $e$  connects the sub-path $P_1=(v_1,v_2,v_3,v_4)$ of $C$. (The case where $e=\{v_2,v_5\}$, or $e=\{v_3,v_5\}$ can be treated analogously.) We denote the remaining sub-walk of $C$ by $P_2=(v_4,\dots,v_{2 \ell +2},v_1).$ Recall that $e \notin G \triangle G'$ otherwise $C$ is not vertex disjoint. We get the following two cases.\\
(1) $\{v_1,v_4\} \in E(G) \cap E(G')$: We apply a cycle swap of length $2 n-2$ on alternating cycle $C'=(v_1,P_2)$, and get $F$-fixed realisation $G^*$ with $|G^* \triangle G'|=4$ and $|G^* \triangle G|=2n.$\\
(2) $\{v_1,v_4\} \notin E(G)\cup E(G')$: We apply a cycle swap of length $4$ on alternating cycle $C'=(P_1,v_1)$, and get $F$-fixed realisation $G^*$ with $|G^* \triangle G'|=2n$ and $|G^* \triangle G|=4.$\\
In the first case we apply on $C'$ the induction hypothesis and get a sequence $G_1,\dots,G_k$ of fixed $F$-realisations with $G_1:=G$ and $G_{k}=G^*$ with $k=n.$ Setting $G_{k+1}:=G'$ leads to the expected result. In the second case we apply on $C'':=G^* \triangle G'$ the induction hypothesis and get a sequence $G_2,\dots,G_{k}$ of fixed $F$-realisations with $G_2:=G^*$ and $G_{k}=G'$ with $k=n+1.$  Setting $G_1:=G$ leads to the expected result.
 \qed\end{proof}

We extend the Curveball algorithm of Strona et al.~\cite{Strona2014b} to an \emph{F-ignoring Curveball algorithm}, i.e. this algorithm ignores in a trade set $F.$ Let us consider the two adjacency lists $A_1:=\{1,2,3,4,5,6\}$ and $A_2:=\{4,5,7\}$ and $F$ consists of fixed edge $\{v_1,u_3\}$ and fixed non-edge $\{v_2,u_6\}.$ Then $A_1$ and $A_2$ are only allowed to exchange numbers in set $(A_1 \triangle A_2) \setminus \{3,6\}=\{1,2,7\}.$ Especially, we can replace in $A_1$ either $1$ by $7$, or $2$ by $7.$ Even if $6 \notin A_2$ we cannot swap it with a number in $A_2$ because the non-edge $\{v_2,u_6\}$ is fixed. More formally, we determine regarding $F$ an adjacency lists $F_i$ for each vertex $v_i \in V.$ It is important to notice that all $F_i$s contain neighbours as well as \textbf{non-neighbours} of vertex $v_i$ in $F.$ Recall that the original Curveball algorithm applies $t$ times a trade between two randomly chosen adjacency lists of $G.$ We refine such a trade to \emph{$F$-ignoring trades} in the following way. For two adjacency lists $A_i$ and $A_j,$ we define the sets $A_{i-j}$ and $A_{j-i}$ by $A_{i-j}:=(A_i\setminus (F_i \cup A_j \cup F_j).$ In an $F$-ignoring trade we choose a subset $B_{ij}$ of size $|A_{i-j}|$ in $A_{i-j} \cup A_{j-i}$ uniformly at random, and put the remaining elements in set $B_{ji}.$ Then we delete in $A_i$ set $A_{i-j}$ and add set $B_{ij}.$ We do the analogous procedure in $A_j.$ The $F$-ignoring Curveball algorithm starts with an $F$-fixed realisation $G$ and applies $t$ times the following procedure; (a) choose in $G$ uniformly at random two adjacency lists $A_i$ and $A_j$, (b) choose an $F$-ignoring trade for $A_i$ and $A_j$ uniformly at random, and go to $F$-fixed realisation $G'.$ It can occur there is no suitable trade for two lists. In this case stay in the current realisation $G.$  As a special case we define an  \emph{F-ignoring swap algorithm}. That is we choose $B_{ij}$ in the above algorithm only of size $1.$ Back to our example we find with $F_1=\{3\}$ and $F_2=\{6\}$ that $A_{1-2}=\{1,2\}$ and $A_{2-1}=\{7\}.$ Hence, we can choose in an $F$-ignoring Curveball algorithm three different trades, i.e., the swap of elements $1$ and $7$, of $2$ and $7$, or, we stay in the current realisation in choosing $B_{ij}=\{1,2\}.$ Please observe that each trade conserves the number of elements in $A_1$ and $A_2$ which can be exchanged with each other. This is a fundamental observation for the convergence of the Markov chain to the uniform distribution as we show in the next proof.

\begin{corollary}\label{Cor:Curveball_F_fixed}
The $F$-ignoring swap algorithm and the $F$-ignoring Curveball algorithm sample $F$-fixed realisations uniformly at random for $t \mapsto \infty$ if $F$ does not contain a matching of size three.
\end{corollary}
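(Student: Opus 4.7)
The plan is to establish the three standard ingredients for convergence to the uniform distribution on the state space of $F$-fixed realisations: irreducibility, aperiodicity, and symmetry of the transition matrix. Symmetry forces the uniform distribution to be stationary, and together with irreducibility and aperiodicity it yields convergence in total variation from any starting state.

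For irreducibility I would invoke Theorem~\ref{theorem:connectedness_F_Fixed_realizations_swaps}: any two $F$-fixed realisations are joined by a sequence of $4$-swaps. A $4$-swap on lists $A_i,A_j$ is realised in the trade framework by choosing $B_{ij}$ so that it differs from $A_{i-j}$ in exactly one element, and it is legal because the $4$-swaps produced in the proof of Theorem~\ref{theorem:connectedness_F_Fixed_realizations_swaps} explicitly use vertex pairs outside $F$. This gives irreducibility of the $F$-ignoring swap chain, and hence also of the $F$-ignoring Curveball chain, whose reachability relation contains that of the swap chain. Aperiodicity comes for free: at any state, choosing $B_{ij}:=A_{i-j}$ produces the identity trade, contributing a self-loop of positive probability.

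The heart of the argument is symmetry of the transition matrix, $P(G,G')=P(G',G)$. It suffices to treat realisations $G,G'$ differing by a single trade on a pair of lists $(i,j)$, since otherwise both sides vanish. In that case
\begin{equation*}
P(G,G')=\frac{1}{\binom{n}{2}}\cdot\frac{1}{\binom{|A_{i-j}\cup A_{j-i}|}{|A_{i-j}|}}.
\end{equation*}
The trade modifies only the non-fixed positions inside $A_{i-j}\cup A_{j-i}$, leaving $F_i$, $F_j$ and $A_i\cap A_j$ untouched, and it preserves $|A_i|$ and $|A_j|$ because it is degree-preserving. Consequently the post-trade tradeable union $A'_{i-j}\cup A'_{j-i}$ coincides setwise with $A_{i-j}\cup A_{j-i}$, and the split $|A'_{i-j}|=|A_{i-j}|$ is also preserved. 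Hence the formula for $P(G',G)$ reproduces that for $P(G,G')$, proving symmetry, whence uniformity of the stationary distribution, whence the corollary.

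The main obstacle is precisely the bookkeeping in the last paragraph: verifying that both the set $A_{i-j}\cup A_{j-i}$ and the cardinality $|A_{i-j}|$ are invariant under the trade, which requires one to track separately the fixed portions contributed by $F_i,F_j$ and the common portion $A_i\cap A_j$. Once this invariance is established the symmetry argument is clean and the remaining ergodic-theoretic steps are standard.
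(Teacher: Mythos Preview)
Your proposal is correct and follows essentially the same route as the paper: irreducibility via Theorem~\ref{theorem:connectedness_F_Fixed_realizations_swaps}, aperiodicity via the identity trade $B_{ij}=A_{i-j}$, and symmetry of $P$ via the invariance of $|A_{i-j}\cup A_{j-i}|$ and $|A_{i-j}|$ under a trade. The paper phrases the last step slightly differently (observing $A_i\triangle A_j=B_i\triangle B_j$ and then removing $F_i\cup F_j$), but the content is identical.
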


\begin{proof}
The fundamental Theorem of Markov chains, see for example~\cite{LevinPeresWilmer2006}, states that this algorithm samples an $F$-fixed realisation uniformly at random for $t \mapsto \infty$, if the Markov chain is irreducible, reversible and aperiodic. Since there is always a swap sequence between two realisations the chain is irreducible because we can define a sequence of trades (which are $4$-swaps) between each pair of realisations with Theorem~\ref{theorem:connectedness_F_Fixed_realizations_swaps}. A trade between two realisations $G$ and $G'$ can always be done in both directions. Hence, the state graph is symmetrical. We need to prove that we find transition probabilities $P(A,B)=P(B,A)$ for all matrices $A$ and $B$ which differ by one $F$-ignoring trade. In this case, the Markov chain is reversible, i.e. $\pi(A)P(A,B)=P(B,A)\pi(B)$ and the stationary distribution $\pi$ must be uniform. Let us consider sets $A_{i-j}$ and $B_{i-j}$ for two matrices $A$ and $B$ which differ by one $F$-ignoring trade for adjacency lists $i$ and $j$. The transition probability from $A$ to $B$ is $P(A,B)=\frac{2}{n \cdot (n-1)}{|A_{i-j} \cup A_{j-i}|\choose |A_{i-j}|}^{-1}.$ We show that $|A_{i-j}|=|B_{i-j}|$ and $|A_{j-i}|=|B_{j-i}|.$ Since $B_{j-i} \cap B_{i-j}=\emptyset$ it follows $P(A,B)=P(B,A).$ First, observe that for matrices $A$ and $B,$ which differ by one trade for adjacency lists of $v_i$ and $v_j$, we find that they have the same set of $1$'s which are exchangeable. This means $A_i \triangle A_j=B_i \triangle B_j.$ Especially the number of exchangeable $1$'s in $A_i$ must be the same as in $B_i$ because an $F$-ignoring trade swaps a number of elements in $A_i$ in taking the same number of elements from $A_j.$ Since the elements in $F_i$ and $F_j$ are ignored we find $|A_{i-j}|=|A_i\setminus (A_j \cup F_i \cup F_j)|=|B_i\setminus (B_j \cup F_i \cup F_j)|=|B_{i-j}|$ and also $|A_{j-i}|=|A_j\setminus (A_i \cup F_j \cup F_i)|=|B_j\setminus (B_i \cup F_j \cup F_i)|=|B_{j-i}|.$ The chain is aperiodic since we always find a $4$-swap in matrix $A$ whenever there are at least two $F$-fixed realisations with Theorem~\ref{theorem:connectedness_F_Fixed_realizations_swaps}. This swap corresponds to a trade between two list $A_i$ and $A_j.$ But then we also have the trade where $B_{i,j}=A_{i-j}$, and stay in the current realisation.
\qed\end{proof}

Sometimes it can happen that a certain set $F'$ of edges and non-edges for a given degree sequence is contained in \textbf{each} realisation. Searching for an $F'$-fixed realisation could be implemented in searching for a realisation without any conditions since edges and non-edges of $F'$ will be contained in each realisation anyway. This approach leads to a very useful extension of Theorem~\ref{theorem:connectedness_F_Fixed_realizations_swaps}. We can extend the result to all $F \cup F^{*}$-fixed sets where $F$ does not contain a matching of size three, and $F^{*} \subset F'$. We denote the set $F'$ by \emph{the static edge and non-edge set} of sequence $S$ which is a unique set. Given an $H$-fixed realisation problem one can first investigate if there exist a partition of $H$ in $F$ and $F^{*}$ such that $F$ does not contain a matching of size three and $F^{*} \subset F'$. For that we have to determine set $F'.$ This can be done by a repeated application of the Gale-Ryser theorem, see~\cite{gale1957,ryser1957}, which gives sufficient and necessary conditions that a sequence has a bipartite realisation.\\
 We propose the following approach. For each vertex pair $\{v_i,u_j\} \in V \times U$ we reduce sequence $S=(a_1,\dots,a_n),(b_1,\dots,b_{n'})$ to $S'$ in setting $a_i':=a_i-1$ and $b_j':=b_j-1.$ Then we apply the Gale-Ryser theorem on $S'$. If $S'$ is not realisable we can conclude that $\{v_i,u_j\}$ belongs to the non-edges in set $F'$. We apply the same approach for each vertex pair to the \emph{opposite sequence} $\overline{S}:=(n'-a_1,\dots,n'-a_n),(n-b_1,\dots,n-b_{n'}).$ If it turns out that $\{v_i,u_j\}$ cannot be realized for sequence $\overline{S},$ then $\{v_i,u_j\}$ is contained in each realisation of $S$, and so belongs to the edge set of $F'.$ It is not necessary to consider all vertex pairs $\{v_i,u_j\}.$ At the beginning it is useful to consider an arbitrary realisation $G.$ A vertex which is connected to all possible vertices can be deleted and the sequence be reduced since all these edges will occur in each further realisation. However, it is more useful to determine all possible $4$-swaps in $G.$ Edges and non-edges of those swaps can change their role in realisations, and so not belong to the static edge and non-edge set $F'$ of $S.$ These edges do not need to be considered in the Gale-Ryser test. The general result for $H$-fixed realisations can be given in the following Theorem~\ref{theorem:connectedness_F_F'_Fixed_realizations_swaps}.

\begin{theorem}\label{theorem:connectedness_F_F'_Fixed_realizations_swaps}
Let $G=(V,U,E)$ and $G'=(V,U,E')$ be two different $H$-fixed realizations of a sequence $S$ where $H$ can be partitioned in sets $F$ and $F^{*}$ such that 
\begin{enumerate}
\item $F$ does not contain a matching of size $3$, 
\item $F^{*} \subset F'$ where $F'$ is the static edge and non-edge set of $S$, and 
\item $F \cap F^{*} = \emptyset$.
\end{enumerate}
Then the $H$-ignoring swap and Curveball algorithms sample an $H$-fixed realisation uniformly at random for $t \mapsto \infty.$
\end{theorem}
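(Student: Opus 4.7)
The plan is to reduce the statement to Theorem~\ref{theorem:connectedness_F_Fixed_realizations_swaps} and the proof technique of Corollary~\ref{Cor:Curveball_F_fixed}, exploiting the fact that $F^{*} \subseteq F'$ makes the $F^{*}$-constraints vacuous on the space of realisations of $S$. First I would observe that the set of $H$-fixed realisations of $S$ equals the set of $F$-fixed realisations of $S$: by definition of $F'$, every realisation of $S$ contains every edge of $F'_E$ and omits every non-edge of $F'_N$, so the conditions imposed by $F^{*} \subseteq F'$ are automatically met. Applying Theorem~\ref{theorem:connectedness_F_Fixed_realizations_swaps} to $F$ then yields, for any two such realisations $G,G'$, a sequence of $F$-avoiding $4$-swaps joining them.

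Next I would check that each of these $4$-swaps is simultaneously a legal move of the $H$-ignoring chain, i.e.\ none of its four involved edges lies in $F^{*}$. Since $F^{*} \subseteq F'$, and every edge of $F'_E$ lies in every realisation of $S$ while every non-edge of $F'_N$ is absent from every realisation, no element of $F'$ can appear in the symmetric difference of two realisations. Because the four edges modified by a $4$-swap are exactly the symmetric difference between the current realisation and the swapped one, none of them lies in $F'$, hence none lies in $F^{*}$. This proves irreducibility of the $H$-ignoring chain.

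Reversibility and aperiodicity transfer directly from the proof of Corollary~\ref{Cor:Curveball_F_fixed} with $H_i,H_j$ taking the role of $F_i,F_j$. For two $H$-fixed realisations $A,B$ differing by a single $H$-ignoring trade on lists $(A_i,A_j)$, the identity $A_i \triangle A_j = B_i \triangle B_j$ together with the symmetric definitions $A_{i-j} = A_i \setminus (A_j \cup H_i \cup H_j)$ and $A_{j-i} = A_j \setminus (A_i \cup H_i \cup H_j)$ forces $|A_{i-j}| = |B_{i-j}|$ and $|A_{j-i}| = |B_{j-i}|$, giving $P(A,B) = \tfrac{2}{n(n-1)} \binom{|A_{i-j} \cup A_{j-i}|}{|A_{i-j}|}^{-1} = P(B,A)$. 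Aperiodicity follows from the always-available self-trade $B_{ij} = A_{i-j}$, and the fundamental theorem of Markov chains then forces the stationary distribution to be uniform on $H$-fixed realisations.

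The most delicate ingredient is the verification in the second paragraph for the auxiliary diagonal $\{v_1,v_4\}$ chosen in the proof of Theorem~\ref{theorem:connectedness_F_Fixed_realizations_swaps}, which is not itself inside $G \triangle G'$: if this diagonal were in $F'_E$ it would be forced to be present in the new realisation $G^{*}$, contradicting case (1) where the $4$-swap removes it; and if it were in $F'_N$ it would be forbidden in $G^{*}$, contradicting case (2) where the $4$-swap adds it. This degree-preservation closure of $F'$ under swaps is precisely what makes the preprocessing step with Gale--Ryser safe to combine with Theorem~\ref{theorem:connectedness_F_Fixed_realizations_swaps}.
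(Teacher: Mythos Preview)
Your proof is correct and follows the same route as the paper's terse argument: identify the set of $H$-fixed realisations with the set of $F$-fixed realisations, invoke the $4$-swap connectivity from Theorem~\ref{theorem:connectedness_F_Fixed_realizations_swaps}, and observe that no element of $F^{*}\subseteq F'$ can ever lie in the symmetric difference of two realisations, so every $F$-avoiding $4$-swap is automatically $H$-avoiding. Your fourth paragraph is redundant, however: the auxiliary diagonal $\{v_1,v_4\}$ is one of the four pairs in the $4$-swap $G_i\triangle G_{i+1}$ between two realisations of $S$, so it is already handled by the general argument in your second paragraph and needs no separate treatment.
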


\begin{proof}
Each $F$-fixed realisation is equivalent to an $H$-fixed realisation. Hence, Corollary~\ref{Cor:Curveball_F_fixed} ensures that an $F$-ignoring Curveball algorithm samples an $H$-fixed  realisation at random. Since suitable trades cannot contain edges from $F^*,$ an $H$-fixed swap chain samples all $H$-fixed realisations.
\qed\end{proof}

\section{General $F$-fixed Markov chains, and chains with swap cycles of length six}

We start with a result which will be needed in our main theorem.

\begin{proposition}\label{prop:cycles_in_F}
Given an even vertex disjoint cycle $C=(v_0,\dots,v_{2n-1},v_0)$ of length $2n \geq 8.$ Let $A$ be the edge set of all vertex pairs $\{v_i,v_j\}$ on $C$ such that $v_i$ and $v_j$ have an odd distance on $C$ of at least length three. Then $A$ contains each cycle of length $8 \leq 2 \ell \leq 2n.$
\end{proposition}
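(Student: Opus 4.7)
My proof strategy has two phases: first, reformulate $A$ as a complete bipartite graph minus a Hamiltonian cycle, turning the proposition into a cycle-length claim; second, exhibit a cycle of each desired length by an explicit ``zigzag'' pattern, patching two exceptional values of $\ell$ where the clean construction fails to close.

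For the reformulation, I would observe that the parity of indices on $C$ partitions $V(C)$ into $V_0 = \{v_{2i}\}$ and $V_1 = \{v_{2i+1}\}$, each of size $n$, and that a pair of vertices has odd distance on $C$ iff it crosses this bipartition. The only cross-bipartition pairs at distance $1$ are the $2n$ edges of $C$ itself, so $A = E(K_{V_0,V_1}) \setminus E(C)$. Relabeling $V_0 = \{u_1, \dots, u_n\}$ and $V_1 = \{w_1, \dots, w_n\}$ so that $C = u_1 w_1 u_2 w_2 \cdots u_n w_n u_1$, membership in $A$ becomes simply: $u_i w_j \in A$ iff $j \not\equiv i, i-1 \pmod n$. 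The goal reduces to producing, for every $\ell$ with $4 \leq \ell \leq n$, a $2\ell$-cycle in this graph.

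The workhorse is the zigzag sequence $Z_\ell : u_1, w_3, u_2, w_4, u_3, w_5, \ldots, u_\ell, w_{\ell+2}, u_1$ (indices of $w$'s taken mod $n$). Its $2\ell$ vertices are distinct because $\ell \leq n$; its interior edges have the two forms $u_i w_{i+2}$ and $w_{i+2} u_{i+1}$, whose index offsets ($2$ and $1$) are not of $C$-edge type; and the closing edge $w_{\ell+2} u_1$ is in $A$ iff $\ell+2 \not\equiv 0, 1 \pmod n$. This immediately delivers $2\ell$-cycles for all $\ell \in \{4, \ldots, n-3\} \cup \{n\}$.

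For the two exceptional values $\ell = n-1$ (arising only when $n \geq 5$) and $\ell = n-2$ (arising only when $n \geq 6$), I would reroute the last step through the unused vertex $w_2$: take $u_1, w_3, u_2, w_4, \ldots, u_{\ell-1}, w_{\ell+1}, u_\ell, w_2, u_1$. The three new edges must be checked against the rule $j \not\equiv i, i-1 \pmod n$; the delicate one is $u_\ell w_2$, which demands $\ell \not\equiv 2, 3 \pmod n$ and hence the indicated lower bounds on $n$. The one remaining instance, $n = 4$, needs only $\ell = n = 4$, which is already covered by the basic zigzag. The main obstacle I expect is precisely this boundary bookkeeping: the naive zigzag breaks exactly when its closing edge would coincide with one of the two $C$-edges at $u_1$, and the rerouted construction must be verified not to reintroduce a forbidden edge; once the modular arithmetic is pinned down, the remaining checks are mechanical applications of the two-condition adjacency rule.
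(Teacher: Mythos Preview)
Your argument is correct and takes a genuinely different route from the paper's. The paper proceeds by induction on $n$: it embeds a $(2n)$-cycle $C'$ inside the $(2n+2)$-cycle $C$ via the chord $\{v_{2n-1},v_0\}$, invokes the inductive hypothesis on $C'$ for all lengths up to $2n$, and then constructs the Hamiltonian $(2n+2)$-cycle in $A$ as $\bigl(v_{f(0)},v_{f(1)},\dots,v_{f(2n+1)},v_{f(0)}\bigr)$ with $f(i)=ti \bmod (2n+2)$ for some odd $t$ with $3\le t\le 2n-1$ coprime to $2n+2$; the existence of such a $t$ is argued via Euler's $\phi$-function. Your approach instead recognises $A$ as $K_{n,n}$ minus a Hamiltonian cycle, reduces the adjacency rule to the clean two-congruence condition $j\not\equiv i,i-1 \pmod n$, and gives a single explicit zigzag family that hits every required length after patching the two boundary values $\ell=n-1,n-2$. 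This is more elementary: it eliminates the induction and the number-theoretic detour entirely, and the bipartite reformulation makes the structure of $A$ transparent. The paper's construction, by contrast, is uniform (one formula for the full-length cycle at each $n$) and connects the problem to the multiplicative group of $\mathbb{Z}/(2n+2)\mathbb{Z}$, which is conceptually pleasing even if heavier. A minor remark: only two edges, $u_\ell w_2$ and $w_2 u_1$, are genuinely new in your rerouted cycle (the edge $w_{\ell+1}u_\ell$ is still of the standard offset-$1$ type), but your stated checks cover them.
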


\begin{proof}
We proof the claim with induction by the length $2n$ of cycle $C$. We start with the smallest possible length $8.$ Then $C'=(v_0,v_3,v_6,v_1,v_4,v_7,v_2,v_5,v_0)$ is a cycle in $A$ of length $8.$ Let us assume cycle $C=(v_0,\dots,v_{2n+1},v_0)$ has length $2n+2.$ We construct the cycle $C'=(v_0,\dots,v_{2n-1},v_0)$ with $\{v_{2n-1},v_0\}\in A.$ Set $A':=A \setminus \{\{v_{2n-1},v_0\},\{v_{2n},v_1\},\{v_{2n+1},v_2\},\{v_{2n-3,2n}\},\{v_{2n-2},v_{2n+1}\}\}$ is the edge set of all vertex pairs $\{v_i,v_j\}$ on $C'$ such that $v_i$ and $v_j$ connect an odd path on $C'$ of at least length three. With the induction hypothesis it follows that $A'$ contains each cycle of length $8 \leq 2 \ell \leq 2n.$ Since $A' \subset A$, $A$ contains all these cycles too. It remains to prove that $A$ contains a cycle of length $2n+2.$ We define the function $f:\{0,\dots,2 n+1\} \mapsto \{0,\dots,2 n+1\}$ with $f(i)= (t \cdot i) \pmod{2n+2}$ such that $2n-1 \geq t \geq 3$, $t$ is odd, and the pair $t$, $2 n+2$ is relatively prime.\\ 
Such a pair exists what we prove with the Eulerian $\phi$-function counting the number $\phi(2n+2)$ of relatively prime numbers for $2n+2$ in $M:=\{1,\dots,2n+1\}.$ If $\phi(2n+2) \geq 3$ for all $n \geq 4$ we are done. The reason is that number $1$ is always relatively prime to $2n+2.$ Hence, there are two other elements in $M$ which are relatively prime to $2n+2.$ Since $2n+2$ is even these elements need to be odd. This leads to the existence of a $t \in M$ with $2n-1 \geq t \geq 3$ which is relatively prime to $2n+2.$ Assume now that $\phi(2n+2) \leq 2.$ With the definition of $\phi$, we have $\phi(2n+2)=\displaystyle\prod_{p} p^{k_p-1}(p-1)$ where $2n+2=\displaystyle\prod_{p} p^{k_p}$ is the prime factorisation of $2n+2.$ If $2n+2=2^k$ we get $\phi(2n+2)=2^{k-1}\leq 2.$ It follows $k \leq 2$, and so $2n+2 \leq 4$ in contradiction to $n \geq 4.$ Hence, $2n+2$ must be of the form $2n+2=2 \cdot\displaystyle\prod_{p\neq 2} p^{k_p}$ leading to $\phi(2n+2)=\phi(n+1) \leq 2.$ The condition $\phi(n+1)=\displaystyle\prod_{p\neq 2} p^{k_p-1}(p-1)\leq 2$ can only be fulfilled if $p-1 \leq 2$ and $k_p=1.$This is only possible for $n+1=3$ in contradiction to $n \geq 4.$\\
Function $f$ must be bijective. Otherwise we find $f(i)=f(j)$ with $i<j.$ That is $j$ must be of the form $j=i+(2n+2)k$ where $k >0 $ is an integer. This leads to $j\geq 2n+2$ for $i \in \{0,\dots,2n+1\}.$ Hence, $j$ does not occur in the defined interval for $f$ in contradiction to the definition of $f.$ We construct cycle $C^*=(v_{f(0)},v_{f(1)},v_{f(2)},\dots,v_{f(2n+1)},v_{f(0)})$ of length $2 n+2.$ $C^*$ has all edges in $A$, because it alternates between odd and even labeled vertices, i.e. for even $i$ the value $f(i)$ is even, and vice versa for odd $i$. Furthermore, two adjacent vertices in $C^*$ have an odd distance of $2n-1 \geq t \geq 3.$ Since $f$ is bijective, cycle $C^*$ must be vertex-disjoint. 
\qed\end{proof}

Figure~\ref{fig:circle} shows a vertex disjoint cycle $C$ with 12 vertices. Edge set $A$ is given by all vertex pairs $\{v_i,v_j\}$ on $C$ such that $v_i$ and $v_j$ connect an odd path on $C$ of at least length three. We construct cycles in $A$ of lengths $8$, $10$ and $12$ in setting $t=3$ for the first two cases and $t=5$ for the last case.

\begin{figure}\label{fig:circle}
 \centering
 \includegraphics[scale=0.3]{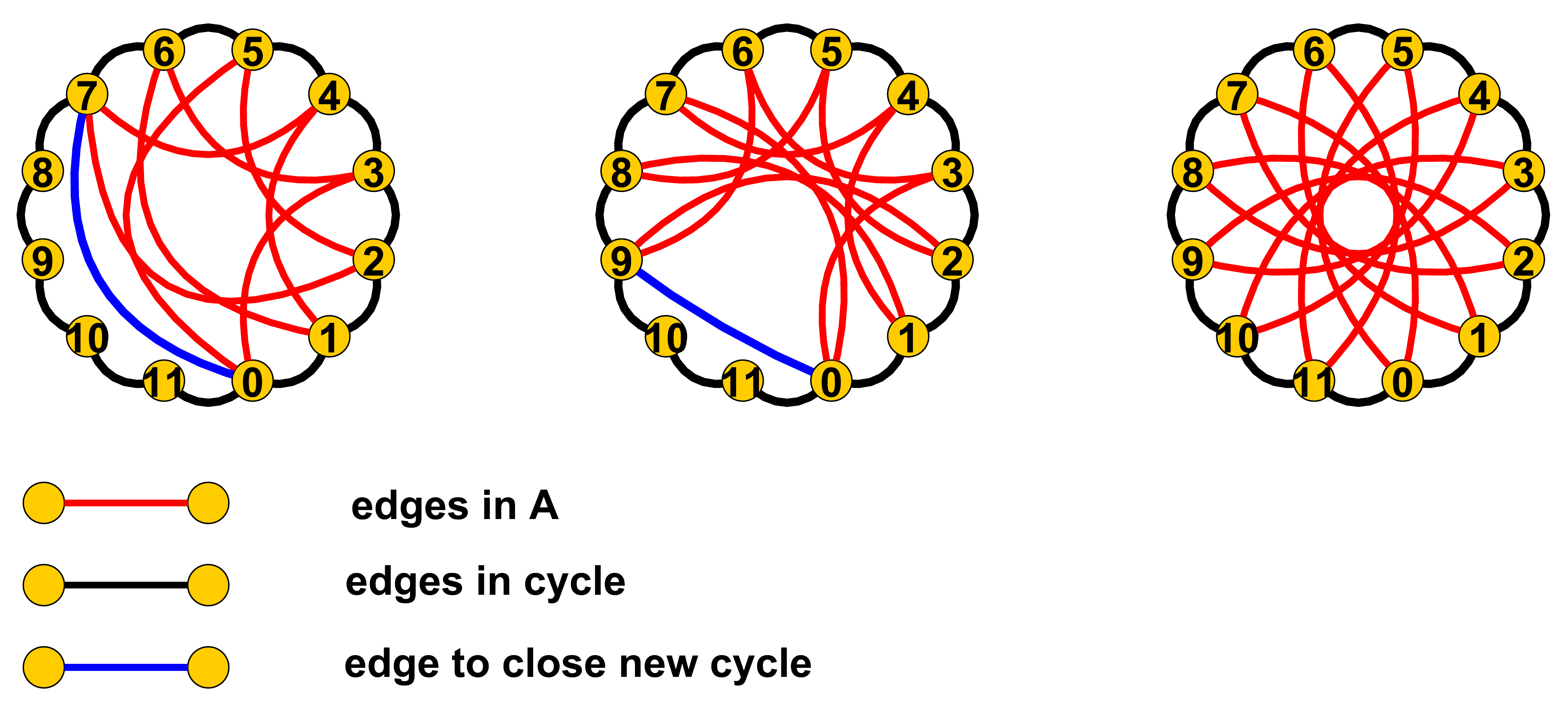}
\caption{Construction of $A$-cycles with length $8$, $10$ and $12.$ }
\end{figure}

\begin{theorem}\label{Theorem:connectedness_F_Fixed_realizations_cycles_length_2k}
Let $G=(V,U,E)$ and $G'=(V,U,E')$ be two different $F$-fixed realisations of a sequence $S$ such that $F$ does not contain a cycle of length $2 \ell$ where $\ell \geq 4$. Then there exist realizations $G_1,\dots,G_k$ with $G_1:=G$, $G_k:=G'$ such that (i) $|G_i \triangle G_{i+1}|\leq 2\ell-2$ where $G_i \triangle G_{i+1}$ corresponds to a $j$-swap with $j \leq 2 \ell-2$ , and (ii) $k\leq \frac{1}{2}|G \triangle G'|.$
\end{theorem}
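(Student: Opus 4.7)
The plan is induction on $|G\triangle G'|$, relying on the decomposition (recalled just before the theorem) of $G\triangle G'$ into vertex-disjoint alternating cycles, each automatically disjoint from $F$ because $F_E\subseteq G\cap G'$ and $F_N\cap(G\cup G')=\emptyset$. Pick any such cycle $K_1$ of length $2n$. If $2n\le 2\ell-2$, then $K_1$ is itself a legal $j$-swap with $j\le 2\ell-2$: swap it in $G$, apply the inductive hypothesis to the resulting smaller pair, and prepend $G$ to the sequence.

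The substantive case is $2n\ge 2\ell$, where $K_1$ must first be broken down by a chord. The crux is finding a chord $\chi=\{u_a,u_b\}$ of $K_1$ with $\chi\notin F$ whose odd distance $d$ on $K_1$ lies in $[3,2\ell-3]$. Given such a chord, it splits $K_1$ into sub-cycles $C_1$ of length $d+1\le 2\ell-2$ and $C_2$ of length $2n-d+1$. Because $K_1$ is vertex-disjoint inside $G\triangle G'$ (so each of $u_a,u_b$ already has degree two there), the chord must lie outside $G\triangle G'$, hence either $\chi\in G\cap G'$ or $\chi\notin G\cup G'$; a parity check along $K_1$ then shows that in both situations $C_1$ is an alternating cycle of exactly one of $G$ or $G'$, with $C_2$ alternating in the other. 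If $C_1$ is $G$-alternating, apply the $(d{+}1)$-swap forward to $G^{*}=G\triangle C_1$ (legal because $C_1\cap F=\emptyset$) and recurse on $(G^{*},G')$; if instead $C_1$ is $G'$-alternating, apply the swap in $G'$ to reach $G^{**}=G'\triangle C_1$, recurse on $(G,G^{**})$, and append $G'$ as the last realisation. In both sub-cases the remaining symmetric difference equals $|G\triangle G'|-d+1$, so it strictly decreases by at least $2$, which both drives the induction and, via the arithmetic inequality $(m-d+1)/2+1\le m/2$ for $d\ge 3$, delivers the bound $k\le\tfrac12|G\triangle G'|$.

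The hard part is the existence of such a short good chord. Writing $A_{\mathrm{short}}$ for the set of odd-distance-in-$[3,2\ell-3]$ chords of $K_1$, it suffices to exhibit one cycle of length $2\ell$ inside $A_{\mathrm{short}}$, since the hypothesis that $F$ avoids all $2\ell$-cycles then forces $A_{\mathrm{short}}\not\subseteq F$ and yields a chord $\chi\in A_{\mathrm{short}}\setminus F$. To produce the cycle I would apply Proposition~\ref{prop:cycles_in_F} to the artificial cycle $C^{\mathrm{art}}=(u_0,u_1,\ldots,u_{2\ell-1},u_0)$ formed from the first $2\ell$ vertices of $K_1$ under their induced cyclic order. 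That proposition delivers a $C_{2\ell}$ whose consecutive indices $f(i),f(i+1)$ differ by some odd $t$ (or $2\ell-t$) with $3\le t\le 2\ell-3$ and $\gcd(t,2\ell)=1$. Each such index gap is at most $2\ell-3$, so the distance on the original $K_1$, namely $\min\bigl(|f(i)-f(i+1)|,\,2n-|f(i)-f(i+1)|\bigr)$, is at most $2\ell-3$, at least $3$ (using $n\ge\ell$), and still odd as the minimum of two odd integers. Every edge of the produced cycle therefore lies in $A_{\mathrm{short}}$, closing the key lemma and hence the theorem.
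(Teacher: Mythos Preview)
Your overall strategy is sound and differs from the paper's in a useful way: the paper applies Proposition~\ref{prop:cycles_in_F} to the \emph{full} alternating cycle $C$ to find \emph{any} odd-distance chord $\{v_i,v_j\}\notin F$, splits $C$ into two sub-cycles of lengths $\ell_1{+}1$ and $\ell_2{+}1$, and then applies the induction hypothesis to \emph{both} pieces, merging the two resulting sequences. You instead apply Proposition~\ref{prop:cycles_in_F} to the artificial $2\ell$-cycle on the first $2\ell$ vertices of $K_1$ to force a \emph{short} chord (odd $K_1$-distance in $[3,2\ell-3]$), so that one sub-cycle is directly a legal $j$-swap and only a single recursive call is needed. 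Your distance-transfer argument is correct: since consecutive indices in the Proposition's $2\ell$-cycle differ by $g\in\{t,2\ell-t\}\subseteq[3,2\ell-3]$, the $K_1$-distance $\min(g,2n-g)$ satisfies $\min(g,2n-g)\le g\le 2\ell-3$, while $2n-g\ge 2n-(2\ell-3)\ge 3$ using $n\ge\ell$, and both $g$ and $2n-g$ are odd. This single-recursion structure also lets you avoid the paper's separate induction basis at $\kappa=\ell$.

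There is, however, one genuine gap. The decomposition of $G\triangle G'$ into simple alternating cycles need not be \emph{pairwise} vertex-disjoint: a vertex of degree four in $G\triangle G'$ will lie on two cycles of any such decomposition. Consequently your sentence ``$K_1$ is vertex-disjoint inside $G\triangle G'$ (so each of $u_a,u_b$ already has degree two there), the chord must lie outside $G\triangle G'$'' is not justified---$u_a$ and $u_b$ have degree two in $K_1$, but possibly higher degree in $G\triangle G'$, and the chord $\chi$ could be an edge of a second cycle $K_2\neq K_1$ through $u_a$ and $u_b$. In that situation your parity analysis breaks: with $\chi\in G\setminus G'$ and $a$ even, $C_1$ is neither $G$- nor $G'$-alternating. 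The paper sidesteps this by first reducing to the case where $G\triangle G'$ is connected (hence a single alternating closed walk $C$), and then splitting $C$ at a repeated vertex before ever looking for a chord; once $C$ is simple, every vertex genuinely has degree two in $G\triangle G'$ and the chord is forced outside. You can patch your argument either by adopting that two-step reduction, or by treating the case $\chi\in G\triangle G'$ directly (note that then $\chi\notin F$ automatically, and one of $C_1,C_2$ is still alternating for $G$ or $G'$; swapping it removes $\chi$ from the symmetric difference as well, so $|G\triangle G'|$ drops by at least $d+1$ rather than $d-1$, and the induction closes a fortiori).
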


\begin{proof}
We prove the statement with induction by the size of the symmetrical difference $\kappa:= \frac{1}{2}|G \triangle G'|.$ 
\paragraph{induction basis.} For each fixed $\ell$ the cases with $2 \leq \kappa \leq \ell-1$ are simple. $G \triangle G'$ decomposes in alternating vertex-disjoint cycles between $G$ and $G'$ of at most size $2\ell-2.$ Hence, condition (i) is fulfilled by the sequence $G_1,\dots,G_k$ with $G_1:=G,$ $G_k:=G'$ such that each $G_i \triangle G_{i+1}$ corresponds to a $j$-swap with $j \leq 2 \ell-2.$ In the case where $G \triangle G'$ decomposes in alternating $4$ cycles, $k$ is maximum with $k=\frac{1}{4}|G \triangle G'|$ fulfilling (ii). To continue the induction basis we set $\kappa=\ell,$ i.e. $|G \triangle G'|=2 \ell.$ If $G \triangle G'$ is not connected, each of the components is an alternating cycle of at most length $2 \ell -4$, decomposing in $j$-swaps with $j \leq 2\ell-4.$ Moreover, index $k$ cannot be larger than $k=\frac{1}{4}|G \triangle G'|.$ We now assume that $G \triangle G'$ is one alternating $2 \ell$-cycle $C:=(v_0,\dots,v_{2 \ell-1},v_0)$ between $G$ and $G'$ which can be (a) vertex-disjoint, or, (b) contains at least one vertex, say $v_1,$ twice. W.l.o.g. we assume that $C$ starts with edge $\{v_0,v_1\}\in E(G).$ 
\emph{We consider case (a)}, see Figure~\ref{fig:induction_beginning}, and assume that all vertex pairs $\{v_i,v_j\}$ on $C$ which have an odd distance of at least $3,$ are arcs in the set $F$. With Proposition~\ref{prop:cycles_in_F} we find a cycle of length $2 \ell$ in $F$ in contradiction to our condition. Hence, there must be a vertex pair $\{v_i,v_j\}$ with an odd distance of at least $3$ on $C$  which does not belong to $F.$ We assume $i<j$, and denote the alternating sub-path on $C$ from $v_i$ to $v_j$ by $P_1=(v_i,v_{i+1},\dots v_j)$ and its length by $\ell_1.$ The remaining odd alternating sub-path on $C$ from $v_j$ to $v_i=v_{j+(2\ell-\ell_1 \pmod{2n})}$ by $P_2=(v_{j\pmod{2n}},v_{j+1\pmod{2n}},\dots,v_{j+(2\ell-\ell_1 \pmod{2n})}).$ Since $\{v_i,v_j\} \notin G \triangle G'$, we have either $\{v_i,v_j\} \in G \cap G'$ or $\{v_i,v_j\} \notin G \cup G'.$ We find that either $C':=(v_i,P_2)$ or $C'':=(P_1,v_i)$ is an $(\ell_2+1)$-swap or $(\ell_1+1)$-swap, respectively. Moreover, we have $\ell_1+1,\ell_2+1 \leq  2 \ell-2$ and get $F$-fixed realisation $G^{*}.$ Furthermore, we find that $G^{*} \triangle G'$ corresponds to a $j$-swap $C^*=(P_1,v_i)$ or $C^{**}=(v_i,P_2)$, respectively. Additionally, we have  $j\leq 2 \ell-2.$ We find a sequence $G_1,G_2,G_3$ of $F$-fixed realisations such that $G_1:=G$, $G_2:=G^{*}$, $G_3:=G'$, and (i) is fulfilled. Condition (ii) is fulfilled because $k=3\leq \ell.$

\begin{figure}
 \centering
 \includegraphics[scale=0.25]{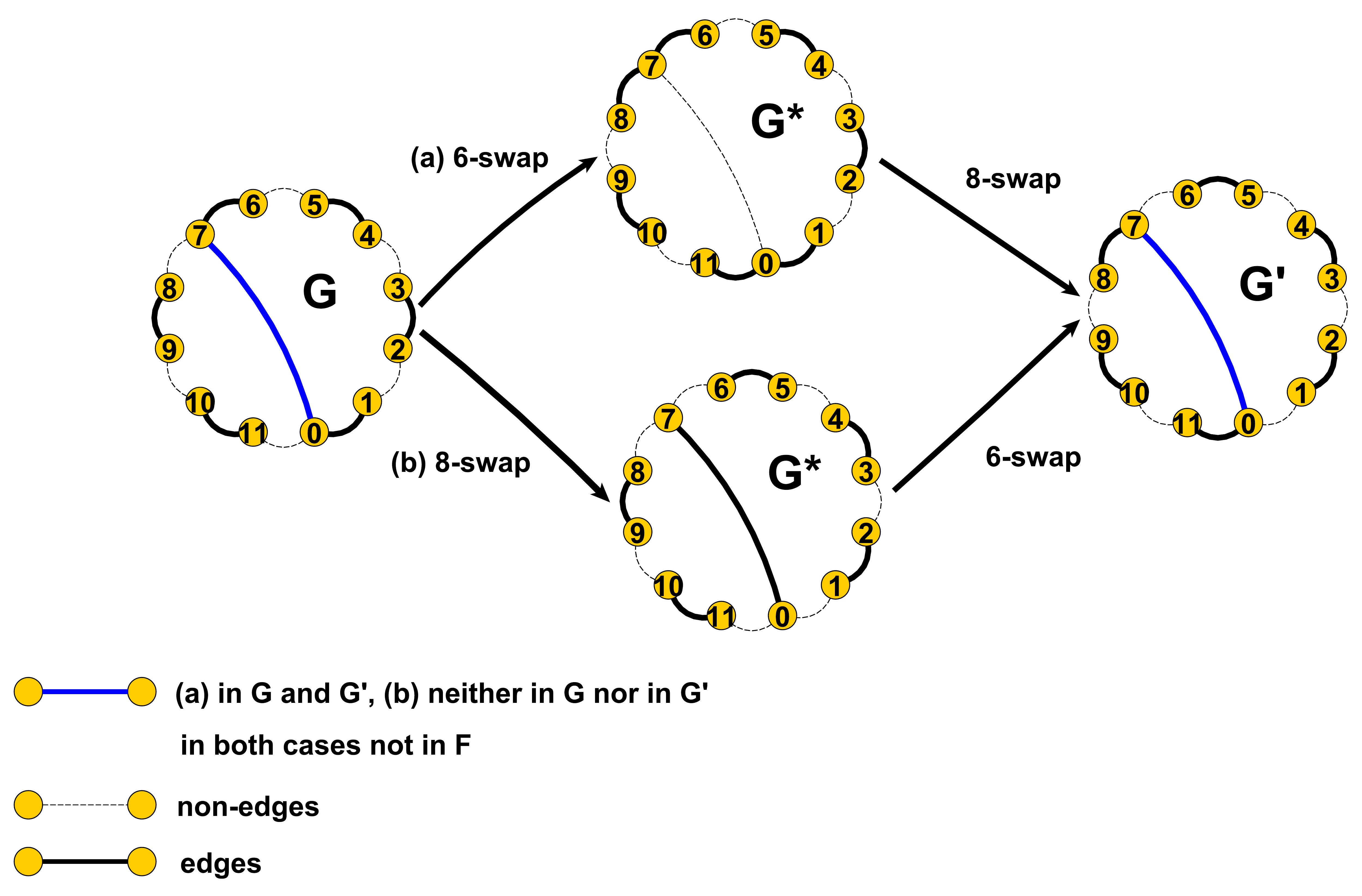}
\caption{induction basis $\kappa= \ell$. Case (a) with $2\ell=12$ and $|G \triangle G'|=12.$}
\label{fig:induction_beginning}
\end{figure}

\emph{We consider case (b)} see Figure~\ref{fig:induction_beginning_2} where the alternating cycle $C$ contains at least one vertex twice. Then $C$ decomposes in at most $\frac{1}{2} \ell$ $j$-swaps with $4 \leq j \leq 2 \ell-4.$ Hence, (i) in our Theorem is fulfilled with sequence $G_1,G_2,\dots,G_{k}$ where $G_1:=G$, $G_{k}:=G'$ and $|G_i \triangle G_{i+1}|\leq 2 \ell-4$ corresponds to one of these $j$-swaps. Condition (ii) is fulfilled because $k \leq \frac{1}{2}\ell +1= \frac{1}{4}|G \triangle G'|+1\leq \frac{1}{2}|G \triangle G'|$ for $|G \triangle G'| \geq 8$ as in our assumption.

\begin{figure}
 \centering
 \includegraphics[scale=0.2]{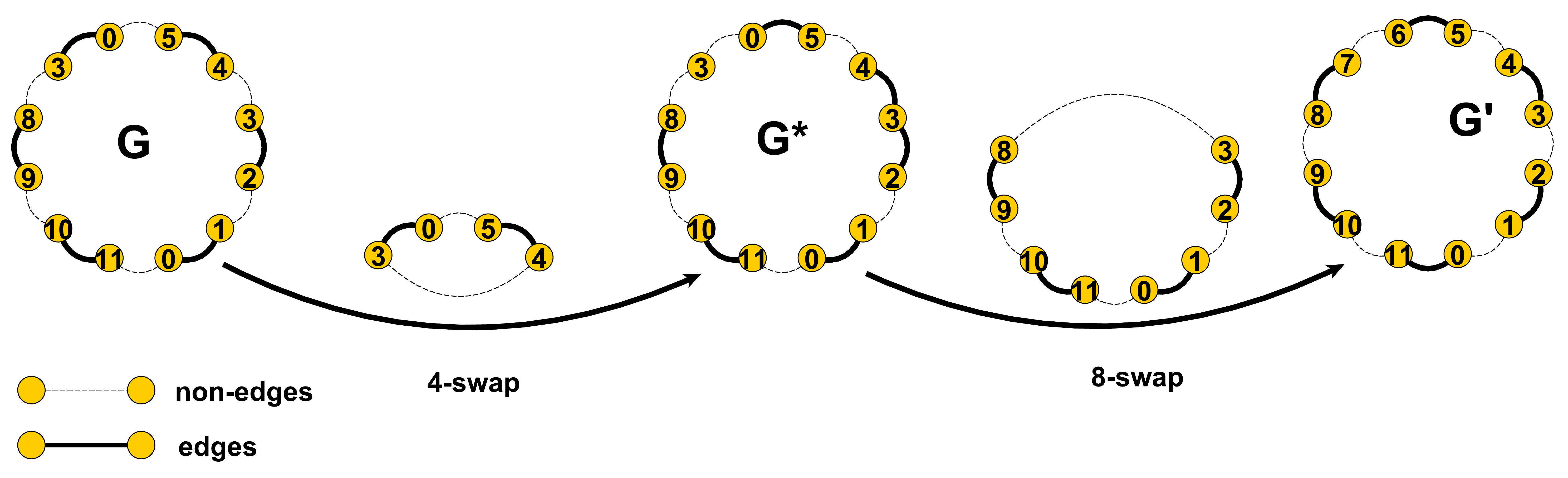}
\caption{induction basis $\kappa= \ell$. Case (b) with $2\ell=12$ and $|G \triangle G'|=12.$}
\label{fig:induction_beginning_2}
\end{figure}

\paragraph{Induction Step.} We only need to consider cases where $\kappa>\ell \geq 4.$ We assume that the claim is right for all $F$-fixed realisations $G$ and $G'$ with $\kappa= \frac{1}{2}|G \triangle G'|\leq n$ where $F$ does not contain a cycle of length $2\ell$. We want to show that the claim is correct for two $F$-fixed realisations $G$ and $G'$ with $\frac{1}{2}|G \triangle G'|= n+1$ where $F$ does not contain a cycle of length $2 \ell.$  If $G \triangle G'$ is not connected we can apply the induction hypothesis on each of the swap cycles (each corresponds to one component) of at most length $2n-2.$ This leads to a sequence $G_1,\dots,G_k$ (we merge for each component its sub-sequences) such that each $G_i \triangle G_{i+1}$ corresponds to a $j$-swap with $j \leq 2\ell-4.$ Each of theses sub-sequences $G'_{1},\dots,G'_{k'}$ for a component has at most length $k'\leq \frac{1}{2}|G'_1 \triangle G'_{k'}|.$ Hence, the whole sequence has at most length $k \leq \frac{1}{2}|G \triangle G'|.$ \\
We now assume that $G \triangle G'$ is one alternating cycle $C=(v_1,v_2,\dots,v_{2 n+2},v_1)$ of length $2 n+2$ starting with $\{v_1,v_2\} \in E(G).$ \\
\emph{If $C$ is not vertex disjoint} there must be a vertex, say $v_1,$ which occurs at least twice in $C.$ Hence, there exist adjacent vertices $v_2,v_{2 n+2},v_i,v_{i+2}$ of $v_1$ on $C$ which must be pairwise different because the corresponding edges belong to the symmetrical difference $G \triangle G'.$ It follows that $C$ decomposes in cycle swaps $C_1:=(v_1,\dots,v_i,v_1)$ and $C_2:=(v_1,v_{i+2},\dots,v_{2n+2},v_1)$ of lengths $4 \leq \ell_1,\ell_2 \leq 2n-2.$ Obviously, we have $\ell_1+\ell_2=2n+2.$ We apply cycle swap $C_1$ on $G$ and yield realisation $G^{*}$ which is $F$-fixed. Moreover, $|G \triangle G^{*}|= \ell_1$ and $|G' \triangle G^{*}|= \ell_2$ corresponds to $C_2.$ We apply the induction hypothesis on $G \triangle G^{*}$, and get a sequence $G_1,\dots,G_{k_1}$ with $G_1:=G$, $G_{k_1}:=G^{*}$  where $G_i \triangle G_{i+1}$ corresponds to a $j$-swap with $j \leq 2\ell-2$. Moreover, due to (ii) $k_1 \leq \frac{1}{2}\ell_1.$ We apply the induction hypothesis on $G^{*} \triangle G'$ and get a sequence $G'_1,\dots,G'_{k_2}$ with $G'_1:=G^{*}$, $G'_{k_2}:=G'$ and  $G_i \triangle G_{i+1}$ corresponds to $j$-swaps with $j \leq 2 \ell-2$. Furthermore, due to (ii) we get $k_2 \leq \frac{1}{2}\ell_2.$ We merge both sequences by setting $G'_{k_2}=G'_1$ and get a sequence which fulfills (i) for $G \triangle G'.$ Moreover, this sequence has length $k=k_1+k_2 -1\leq  \frac{1}{2}(\ell_1+\ell_2)-1=n.$ \\
\emph{If $C$ is vertex disjoint} we have to show that there exists a vertex pair $\{v_i,v_j\} \notin F$ on $C$ connecting an odd alternating path which has at most length three. Assume all such vertex pairs are edges in $F.$ Then we find with Proposition~\ref{prop:cycles_in_F} a cycle of length $2 \ell$ in $F$ in contradiction to our conditions. Without lost of generality we assume that $(v_1,v_j) \notin F$ where $j$ is an even label and connects the odd alternating path $P_1=(v_1,\dots,v_j)$ of length $2n-1 \geq \ell_1 \geq 3.$  We denote the remaining sub-path of length $2n-1 \geq \ell_2 \geq 3$ by $P_2=(v_j,\dots,v_{2 n +2},v_1).$ Obviously, we have $\ell_1+\ell_2=2n+2.$ Observe that edge $\{v_1,v_j\}\notin G \triangle G'$ otherwise $v_1$ has at least three incident edges in $C$ which contradicts the assumption that $C$ is vertex-disjoint. We get two cases.\\
(i) $(v_1,v_j) \in E(G) \cap E(G')$: We get the alternating cycle $C'=(v_j,P_2)$, and swap its edges to the $F$-fixed realisation $G^*$ with $|G^* \triangle G'|=\ell_2+1$ and $|G^* \triangle G|=\ell_1+1.$\\
(ii) $(v_1,v_j) \notin E(G)\cup E(G')$: We get the alternating cycle $C''=(P_1,v_1)$, and swap its edges to the $F$-fixed realisation $G^*$ with $|G^* \triangle G'|=\ell_1+1$ and $|G^* \triangle G|=\ell_2+1.$\\
In the first case we apply on $C'$ the induction hypothesis and get a sequence $G_1,\dots,G_{k_1}$ of fixed $F$-realisations with $G_1:=G$ and $G_{k_{1}}:=G^*$ where $k_1 \leq \frac{1}{2}(\ell_1+1).$ Furthermore, we apply the induction hypothesis on the alternating cycle $G^* \triangle G'$ and get a sequence $G'_1,\dots,G'_{k_2}$ of fixed $F$-realisations with $G'_1:=G^{*}$ and $G'_{k_2}=G'$ with $k_2 \leq \frac{1}{2}(\ell_2+1).$ Merging these two sequences and setting $G_{k_1}:=G'_1$ leads to the expected sequence of $F$-fixed realisations of length $k = k_1+k_2-1 \leq \frac{1}{2}(\ell_1+1+\ell_2+1)-1=n+1.$  In the second case we apply on $C''$ the induction hypothesis and get a sequence $G'_1,\dots,G'_{k_2}$ of fixed $F$-realisations with $G'_1:=G^*$ and $G'_{k_2}:=G'$ where $k_2 \leq \ell_1+1.$ Furthermore, we apply the induction hypothesis on the alternating cycle $G^* \triangle G$ and get a sequence $G_1,\dots,G_{k_1}$ of fixed $F$-realisations with $G_1:=G$ and $G'_{k_1}:=G^{*}$ with $k_1 \leq \frac{1}{2}(\ell_2+1).$ Merging these two sequences and setting $G_{k_1}=G'_1$ leads to the expected sequence of $F$-fixed realisations of length $k = k_1+k_2-1 \leq \frac{1}{2}(\ell_1+1+\ell_2+1)-1=n+1.$ 
\qed\end{proof}

For $\ell=3$ this result is not true. Recall from Theorem~\ref{theorem:connectedness_F_Fixed_realizations_swaps} that for this special case we need to forbid $3$-matchings in $F$ which is stronger than the exclusion of cycles of length six. Since the largest vertex-disjoint cycle in the fixed set $F$ of a bipartite graph $G=(V,U,E)$ with $|V|=n$ and $|U|=n'$ with $n<n'$ can have length $2n$, we have to swap alternating cycles of length $2n$ in $G$ in the worst case. Specifically, scenarios where set $F$ contains Hamiltonian cycles are not practicable because finding alternating cycles of length $2n$ in $G$ is $NP$-complete. In cases where the length of cycles in $F$ is not too long we yield results leading to quite implementable Markov chains for $F$-fixed realisations. Theorem~\ref{Theorem:connectedness_F_Fixed_realizations_cycles_length_2k} leads to the following result for $\ell=4.$

\begin{corollary}\label{Cor:connectedness_F_Fixed_realizations_cycles_length_eight}
Let $G=(V,U,E)$ and $G'=(V,U,E')$ be two different $F$-fixed realizations of a sequence $S$ and $F:=F_E \cup F_N$ such that $F$ doesn't contain a cycle of length eight. Then there exist realizations $G_1,\dots,G_k$ with $G_1:=G$, $G_k:=G'$ and $|G_i \triangle G_{i+1}|\leq 6$ where $G_i \triangle G_{i+1}$ corresponds to a $4$-swap, or, a $6$-swap, and $k\leq \frac{1}{2}|G \triangle G'|$.
\end{corollary}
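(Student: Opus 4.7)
The plan is essentially trivial: the statement is precisely Theorem~\ref{Theorem:connectedness_F_Fixed_realizations_cycles_length_2k} instantiated at $\ell=4$. All I would do is check that every clause of the corollary's conclusion matches a clause of the theorem's conclusion under this substitution, invoke the theorem, and declare victory.

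In detail, first I would note that the hypothesis ``$F$ does not contain a cycle of length eight'' is literally the hypothesis ``$F$ does not contain a cycle of length $2\ell$'' specialized to $\ell=4$, so the theorem applies to the pair $G$, $G'$. This yields a sequence $G_1,\dots,G_k$ with $G_1=G$, $G_k=G'$ such that each symmetric difference $G_i \triangle G_{i+1}$ is a $j$-swap of size at most $2\ell-2 = 6$, and with $k \leq \tfrac12|G\triangle G'|$. The bound $k\leq \tfrac12|G\triangle G'|$ is exactly the one promised by the corollary.

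Next I would observe that, because $G$ and $G'$ are bipartite with identical vertex parts, every alternating cycle in $G \triangle G^{*}$ has even length, so every $j$-swap appearing in the sequence has $j\in\{4,6,\dots,2\ell-2\}$. For $\ell=4$ this shrinks to $j\in\{4,6\}$, giving precisely the statement that each $G_i\triangle G_{i+1}$ corresponds to a $4$-swap or a $6$-swap, and automatically $|G_i\triangle G_{i+1}|\leq 6$ as required.

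There is no real obstacle here; the corollary is a direct unpacking, and the only thing worth pointing out in a written proof is the even-length observation that forces $j\in\{4,6\}$ rather than allowing an odd value $j=5$. All the combinatorial difficulty — the induction on $|G\triangle G'|$, the case split on whether the alternating cycle is vertex-disjoint, and the appeal to Proposition~\ref{prop:cycles_in_F} that produces a length-$2\ell$ cycle in $F$ if too many chords of $C$ lie in $F$ — has already been carried out in the proof of Theorem~\ref{Theorem:connectedness_F_Fixed_realizations_cycles_length_2k}, and is reused verbatim through $\ell=4$.
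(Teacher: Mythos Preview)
Your proposal is correct and matches the paper's approach exactly: the paper simply states that Theorem~\ref{Theorem:connectedness_F_Fixed_realizations_cycles_length_2k} ``leads to the following result for $\ell=4$'' and gives no separate proof. Your added remark that bipartiteness forces $j\in\{4,6\}$ (ruling out $j=5$) is a helpful clarification the paper leaves implicit.
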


Since a forest does not contain a cycle of length eight, we can state the following result. 

\begin{corollary}\label{Cor:connectedness_F_Fixed_Forest}
Let $G=(V,U,E)$ and $G'=(V,U,E')$ be two different $F$-fixed realizations of a sequence $S$ and $F:=F_E \cup F_N$ such that $F$ is a forest. Then there exist realizations $G_1,\dots,G_k$ with $G_1:=G$, $G_k:=G'$ and $|G_i \triangle G_{i+1}|\leq 6$ where $G_i \triangle G_{i+1}$ corresponds to a $4$-swap, or, a $6$-swap and $k\leq \frac{1}{2}|G \triangle G'|$.
\end{corollary}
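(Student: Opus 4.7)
The plan is to observe that this corollary is an immediate specialisation of Corollary~\ref{Cor:connectedness_F_Fixed_realizations_cycles_length_eight}. A forest is, by definition, an acyclic graph, so it contains no cycles whatsoever; in particular, it contains no cycle of length eight. Hence the hypothesis of Corollary~\ref{Cor:connectedness_F_Fixed_realizations_cycles_length_eight} is satisfied whenever $F$ is a forest, and the conclusion transfers verbatim.

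The only thing I would actually write out is one sentence invoking the previous corollary. First I would note that $F = F_E \cup F_N$ being a forest means $F$ has no cycle at all, so in particular no cycle of length $2\ell = 8$. Then I would apply Corollary~\ref{Cor:connectedness_F_Fixed_realizations_cycles_length_eight} directly to the two $F$-fixed realisations $G$ and $G'$ to produce the desired sequence $G_1, \dots, G_k$ with $G_1 = G$, $G_k = G'$, each symmetric difference $|G_i \triangle G_{i+1}| \leq 6$ corresponding to a $4$-swap or a $6$-swap, and $k \leq \tfrac{1}{2}|G \triangle G'|$.

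There is essentially no obstacle here: all the technical work (handling vertex-disjoint versus non-vertex-disjoint alternating cycles, the use of Proposition~\ref{prop:cycles_in_F} to guarantee a short chord, and the induction on $\kappa = \tfrac{1}{2}|G \triangle G'|$) has already been carried out in Theorem~\ref{Theorem:connectedness_F_Fixed_realizations_cycles_length_2k} and inherited by its $\ell = 4$ specialisation. The corollary is stated separately only because the forest hypothesis is the natural structural condition under which practitioners will want to invoke the theorem, and it is worth recording explicitly.
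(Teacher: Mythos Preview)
Your proposal is correct and matches the paper's own reasoning exactly: the paper simply remarks that a forest contains no cycle of length eight and states the corollary as an immediate consequence of Corollary~\ref{Cor:connectedness_F_Fixed_realizations_cycles_length_eight}.
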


An adapted Curveball algorithm needs to perform $6$-swaps and $4$-swaps. Verhelst~\cite{Verhelst2008} gives a combination of his Metropolis-Hastings algorithm which performs trades and single $6$-swaps for square matrices where the fixed elements of $F$ are $0$'s on the diagonal. We here introduce an extended Curveball algorithm which can perform many $6$-swaps in one step and works for all scenarios where $F$ does not contain a cycle of length $8.$ For this we define a \emph{circle trade} for three different adjacency lists $A_i,A_j,A_k$ in the following way; (i)  Determine the sets $A_{j-i}$, $A_{k-j}$, and $A_{i-k}$ with $A_{j-i}:=A_j \setminus (A_i  \cup F_i \cup F_j).$ Let us consider the smallest of the sets $A_{j-i}$, $A_{k-j}$, and $A_{i-k},$ say $A_{i-k}.$ (ii) We choose a subset $A'_i$ of $A_{i-k}$ uniformly at random, i.e. each possible subset has the same probability to be chosen. The number $x':=|A_i'|$ determines the size of the trade. (iii) We choose $x'$-subsets $A_j',$ $A_k'$ in  $A_{j-i}$, and $A_{k-j}$ uniformly at random. (iv) We add vertices $A'_j$ to $A_i$ and delete them in $A_j.$ Furthermore, we add vertices $A_k'$ to $A_j$ and delete them in $A_{k}.$ Lastly we add vertices $A_i'$ to $A_k$ and delete them in $A_i.$ 

Consider for example adjacency matrix $A:=\left(\begin{matrix} \bf{0}&1&0&1&0&0\\0&\bf{0}&1&0&1&0\\1&0&\bf{0}&0&0&1\end{matrix}\right)$ with $F_1=\{1\}$, $F_2=\{2\}$ and $F_3=\{3\}.$ Then we have $A_{1-3}=\{2,4\}$, $A_{2-1}=\{3,5\}$ and $A_{3-2}=\{1,6\}.$ A circle trade of size $x'=2$ replaces $A_1=\{2,4\}$ by $A'_2=\{3,5\},$ $A_2=\{3,5\}$ by $A'_3=\{1,6\}$, and $A_3=\{1,6\}$ by $A'_1=\{2,4\}.$  This yields matrix $B=\left(\begin{matrix} \bf{0}&0&1&0&1&0\\1&\bf{0}&0&0&0&1\\0&1&\bf{0}&1&0&0\end{matrix}\right)$. A circle trade of size $x'=1$ chooses for example subsets $A_1'=\{4\}$, $A_2'=\{3\}$ and $A_3'=\{6\}$, and yields $C=\left(\begin{matrix} \bf{0}&1&1&0&0&0\\0&\bf{0}&0&0&1&1\\1&0&\bf{0}&1&0&0\end{matrix}\right).$ Let us now consider another order of $i,j,k$ which changes the circle trade, i.e. $2,1,3$ instead of $1,2,3.$ It is clear that the element $3$ of list $A_2$ cannot be moved to list $A_3$ because this corresponds to the fixed non-edge $\{v_3,u_3\}$ in $F.$ We get $A_{2-3}=\{5\},$ $A_{1-2}=\{2\}$ and $A_{3-1}=\{6\}.$ We can apply one circle trade of size $x'=1$ in choosing $A_1'=\{4\}$, $A_2'=\{5\}$ and $A_3'=\{6\}$, and get $D:=\left(\begin{matrix} \bf{0}&1&0&0&0&1\\0&\bf{0}&1&1&0&0\\1&0&\bf{0}&0&1&0\end{matrix}\right).$ Observe that we get only two different constellations for circle trades for all $3!$ possibilities to choose an order of $i,j,k.$ Especially each constellation occurs with the same probability. As long as we draw each order with the same probability we can ignore it.

In step (i) of the description of a circle trade we choose a subset in $A_{i-k}$ uniformly at random, i.e. each subset has the same probability to be chosen. This can be done in assigning a boolean variable $n_i \in \{0,1\}$ to each element $j \in A_{i-k}.$ The string $n_1n_2 \dots n_{|A_{i-k}|}$ corresponds to a binary number which can be transformed in a natural number between $0$ and $2^{|A_{i-k}|-1}.$ We choose a random natural number in this range, transform it in its binary number, and yield for element $j$ a $0$ or a $1$. In the corresponding circle trade we put all $j$ with $n_j=1$ in set $A_i'.$ 

We propose the following \emph{Curveball algorithm with circle trades}.  We start with all adjacency lists of an $F$-fixed realisation $G$, and repeat the following steps $t$ times. (a) Choose with probability $\frac{1}{2}$ either an $F$-ignoring trade (see Section~\ref{sec:swaps}), or a circle trade. (b) For an $F$-ignoring trade we choose two adjacency lists uniformly at random, and apply a randomly chosen $F$-ignoring trade to yield $F$-fixed realisation $G'$ if possible. Otherwise we stay in the current $F$-fixed realisation $G.$ (c) For a circle trade we choose one after another three adjacency lists uniformly at random, and apply a randomly chosen circle trade if possible. This yields $F$-fixed realisation $G'$. Otherwise we stay in the current $F$-fixed realisation $G.$
With analogous arguments like in Section~\ref{sec:swaps} we extend the set of solvable problem classes to sets $H$ which can be partitioned in a set $F$ without an $8$-cycle, and a set $F^{*} \subset F'$ where $F'$ is the set of static and non-static edges. Using in a preprocessing step the proposed Gale-Ryser procedure of the last section above Theorem~\ref{theorem:connectedness_F_F'_Fixed_realizations_swaps}, we are able to partition a given set $H$ in polynomial time, and to search for possible cycles of length $8$ in $F.$ Edges or non-edges are static and cannot be contained in circle trades or $F$-ignoring trades. Hence, we can state the following result.

\begin{theorem}\label{theorem:connectedness_F_F'_Fixed_realizations_eight_cycles}
Let $G=(V,U,E)$ and $G'=(V,U,E')$ be two different $H$-fixed realizations of a sequence $S$ where $H$ can be partitioned in sets $F$ and $F^{*}$ such that 
\begin{enumerate}
\item $F$ does not contain cycle of length $8$, 
\item $F^{*} \subset F'$ where $F'$ is the static edge and non-edge set of $S$, and 
\item $F \cap F^{*} = \emptyset$.
\end{enumerate}
Then the $H$-ignoring Curveball algorithms with circle trades samples an $H$-fixed realisation uniformly at random for $t \mapsto \infty.$
\end{theorem}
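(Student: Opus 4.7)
The plan is to apply the Fundamental Theorem of Markov chains exactly as in Corollary~\ref{Cor:Curveball_F_fixed} and Theorem~\ref{theorem:connectedness_F_F'_Fixed_realizations_swaps}: I would verify that the combined chain is irreducible, aperiodic and reversible with symmetric transition probabilities, and conclude that its stationary distribution is uniform on $H$-fixed realisations. The first move is the same reduction used in Theorem~\ref{theorem:connectedness_F_F'_Fixed_realizations_swaps}: because $F^{*}\subset F'$, each edge of $F^{*}\cap F_E$ occurs in every realisation of $S$ and each non-edge of $F^{*}\cap F_N$ is avoided by every realisation of $S$, so the sets of $H$-fixed and $F$-fixed realisations coincide, and the elements of $F^{*}$ cannot be moved by any $H$-ignoring trade in the first place. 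Hence it suffices to prove ergodicity on the state space of $F$-fixed realisations, where $F$ has no cycle of length $8$.

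For irreducibility I would invoke Corollary~\ref{Cor:connectedness_F_Fixed_realizations_cycles_length_eight}: any two $F$-fixed realisations are joined by a sequence of $4$-swaps and $6$-swaps, and each of these steps occurs with positive probability in the combined chain. A $4$-swap on two lists is an $F$-ignoring trade of exchange size one (as in Corollary~\ref{Cor:Curveball_F_fixed}); a $6$-swap on three lists $A_i,A_j,A_k$ is a circle trade with $x'=1$ in which the singletons $A_i',A_j',A_k'$ correspond to the three symmetric-difference entries of the hexagon. Aperiodicity is immediate from the null trades $B_{ij}=A_{i-j}$ and $A_i'=A_j'=A_k'=\emptyset$, each available at every state with positive probability.

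For reversibility I would show $P(G,G')=P(G',G)$ directly whenever $G$ and $G'$ differ by a single circle trade; the two-list case already appears in Corollary~\ref{Cor:Curveball_F_fixed}. The key structural observation is that a circle trade is a cyclic rotation of entries among the three rows of the $3\times|U|$ sub-adjacency-matrix restricted to non-$F$ columns, so that the multiset $\{|A_{j-i}|,|A_{k-j}|,|A_{i-k}|\}$ is preserved by the trade and only the roles of $i,j,k$ are permuted. Of the six orderings of $(i,j,k)$, three produce the forward cycle direction and three produce the reverse; summing the corresponding selection probabilities of the form $2^{-|A_{\bullet-\bullet}|}\binom{|A_{\bullet-\bullet}|}{x'}^{-1}\binom{|A_{\bullet-\bullet}|}{x'}^{-1}$ over the orderings gives an expression depending only on the preserved multiset and on $x'$, which then equals the corresponding sum for $P(G',G)$.

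The hard part is precisely the asymmetric ``pick the smallest set'' in step (i) of the circle trade, which introduces a normalising factor $2^{-|A_{i-k}|}$ attached to one particular set. Reversibility hinges on verifying that the identity (and hence the cardinality) of this smallest set is preserved under reversal of the trade, which follows from the multiset invariance above, and on carefully matching the forward and reverse contributions across the six orderings of the triple. Once this bookkeeping is handled, the Fundamental Theorem of Markov chains yields the uniform stationary distribution on $F$-fixed, and hence $H$-fixed, realisations, and the theorem follows.
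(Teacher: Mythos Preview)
Your plan is essentially the paper's own proof: reduce $H$ to $F$ via the static set $F'$, then check irreducibility through Corollary~\ref{Cor:connectedness_F_Fixed_realizations_cycles_length_eight}, aperiodicity via self-loops, and reversibility by showing $P(A,B)=P(B,A)$ for single trades. Two details differ. Your aperiodicity argument via the empty circle trade $x'=0$ is cleaner than the paper's case analysis, which instead argues that one of the two circle-trade constellations on a triple must fail and hence yield a stay. For reversibility the paper simply writes $P(A,B)=\tfrac{3}{n(n-1)(n-2)}\cdot 2^{-m}$ with $m=\min\{|A_{i-k}|,|A_{j-i}|,|A_{k-j}|\}$ and asserts that $m$ is unchanged by the trade; your formula with the additional factors $\binom{|A_{\bullet-\bullet}|}{x'}^{-1}$ is the more accurate transcription of steps (ii)--(iii), and your stronger invariance claim (the whole multiset of sizes, not just its minimum, survives the trade with only the cyclic roles swapped) is exactly what is needed to make those extra factors cancel. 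So your outline is correct and, on the reversibility side, a bit more careful than the paper's own argument.
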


\begin{proof}
Let us start with the $F$-ignoring Curveball algorithms with circle trades. With the fundamental theorem for Markov chains the statement is true if the chain is irreducible, reversible and aperiodic. Since there is always a sequence of $4$-swaps and $6$-swaps between two realisations with Theorem~\ref{Theorem:connectedness_F_Fixed_realizations_cycles_length_2k}, the chain is irreducible because we can define a sequence of $F$-ignoring trades (which are $4$-swaps), and circle trades ($6$-swaps) between each pair of realisations. A circle trade or an $F$-ignoring trade between two realisations $G$ and $G'$ can always been done in both directions which makes the state digraph symmetrical. We need to prove that we find transition probabilities $P(A,B)=P(B,A)$ for all matrices $A$ and $B$ which differ by one $F$-ignoring trade or a circle trade. In this case the Markov chain is reversible, i.e. $\pi(A)P(A,B)=P(B,A)\pi(B)$ and the stationary distribution $\pi$ must be uniform. For an $F$-ignoring trade this condition was already proven in Corollary~\ref{Cor:Curveball_F_fixed} with the little difference that we here have $P(A,B)=\frac{1}{n \cdot (n-1)}{|A_{i-j} \cup A_{j-i}|\choose |A_{i-j}|}^{-1}$. It does not change anything on the proof idea from the mentioned corollary.  Hence, here we can focus on circle trades. Let us consider sets $A_{i-k}$ and $B_{i-k}$ for two matrices $A$ and $B$ which differ by one circle trade for adjacency lists $A_i$, $A_j$ and $A_k$. In a circle trade we always choose $x'$-subsets with $1 \leq x' \leq \min\{|A_{i-k}|,|A_{j-i}|,|A_{k-j}|\}.$ Let us denote this minimum by $m.$ Then there are $2^m$ possible circle trades for one chosen constellation of adjacency lists. (Recall that we have $3!$ possible orders of adjacency lists but only two different constellations for circle trades.) Hence, the transition probability from $A$ to $B$ is $P(A,B)=\frac{3}{n \cdot (n-1) \cdot (n-2)}\frac{1}{2^m}.$ The factor $3$ in this term occurs since three different orders of chosen $i,j,k$ lead to the same circle trade. The circle trade in $B$ which is necessary to come from $B$ to $A$ uses the whole trade in the opposite direction, i.e. it works on sets $B_{k-i},$ $B_{i-j}$ and $B_{j-k}$. We need to show that the minimum of these sets is also of size $m.$ Each circle trade from $A_j$ to $A_i$, $A_k$ to $A_j,$ and $A_i$ to $A_k$ can be done in the opposite direction from $B_i$ to $B_j$, $B_j$ to $B_k,$ and $B_k$ to $B_i.$ Hence, $m$ must be the same between $A$ and $B$ which differ by a circle trade. Hence, we find $P(A,B)=P(B,A).$\\
The chain is aperiodic since we always find a $4$-swap or a $6$-swap in a matrix $A$ whenever there are at least two $F$-fixed realisations with Theorem~\ref{Theorem:connectedness_F_Fixed_realizations_cycles_length_2k}. These swaps either corresponds to (a) an $F$-ignoring trade between two lists $A_i$ and $A_j,$ or, (b) a circle trade between three lists $A_i,$ $A_j$ and $A_k.$ As soon as case (a) occurs in one single realisation $A$ we also have the trade where $B_{i,j}=A_{i-j}$, and stay in the current realisation. If we only have case (b) for all realisations there must be fixed edges and non-edges in $F$ in each matrix $A.$  Otherwise we find case (a). Then we have two different constellations for three chosen adjacency lists for a circle trade, i.e. either $i,j,k$ or $j,i,k.$ Assume a circle trade for both orderings is always possible. Then it is easy to construct a $4$-swap for two of these lists since all couples $A_{i-j},A_{j-i}$ are not empty. Hence, there is one constellation which cannot lead to a suitable circle trade showing that the $F$-ignoring chain with circle trades is aperiodic.\\
It is clear that each $F$-fixed realisation is also an $H$-fixed realisation and vice versa. Especially edges in $F^*$ cannot be contained in suitable $F$-ignoring trades or circle trades. Hence, each $F$-ignoring trade can be modified to an $H$-ignoring trade, and in circle trades we can exclude edges from $H.$ Hence, the $H$-ignoring Curveball algorithm with circle trades is an ergodic Markov chain too.
\qed\end{proof}
 
Let us consider a special sub-case of this set $F$, i.e. $F$ only consists of non-edges which form a perfect matching. Then $F$ does not contain a cycle of length $8.$ In this case, we can relabel some vertices such that all $0$'s in $F$ occur in the diagonal of a suitable $F$-fixed realisation matrix $A$ of a bipartite sequence $S=(a_1,\dots,a_n),(b_1,\dots,b_{n'})$. Now interpreting $A$ as the adjacency matrix of a directed degree realisation without loops of the directed sequence $S_d=((a_1,b_1),\dots,(a_n,b_n))$ leads to the well-known analogous result of Rao et al.~\cite{Rao1996}, that there exists an ergodic sampling chain which is based on directed $4$-swaps, and the reorientation of directed cycles of length three. Directed cycles of length three correspond in this bipartite interpretation to $6$-swaps. However, Berger et al. showed in \cite{Berger2010} that it is sufficient to reorient a special kind of directed cycles to get an ergodic sampling chain. These are cycles which are contained in one of the possible directions in each directed realisation of $S_d.$ We call the vertices of one of those cycles \emph{induced cycle set}, and $C_F$ the set of all induced cycles for $S_d$. Induced cycle sets were also shown to be pairwise disjoint. Moreover, they also showed that it is possible to avoid the reorientation of induced cycles in a sampling chain, and only to use swaps. The reason is that in this case the state graph decomposes in $2^k$ \emph{isomorphic components} where $k:=|C_F|\leq \frac{n}{3}.$ This means it is possible to choose one of these components uniformly at random, and then to start a random walk in this component, i.e. only swaps need to be applied. Unfortunately, the nice property of isomorphic components  breaks in our more general case if we avoid circle trades. The state graph in Figure~\ref{fig:counter_example} decomposes in two non-isomorphic components if $6$-swaps are avoided. The set $F$ is here a matching of size $3$ and only one fixed additional edge. 

\begin{figure}
 \centering
 \includegraphics[scale=0.2]{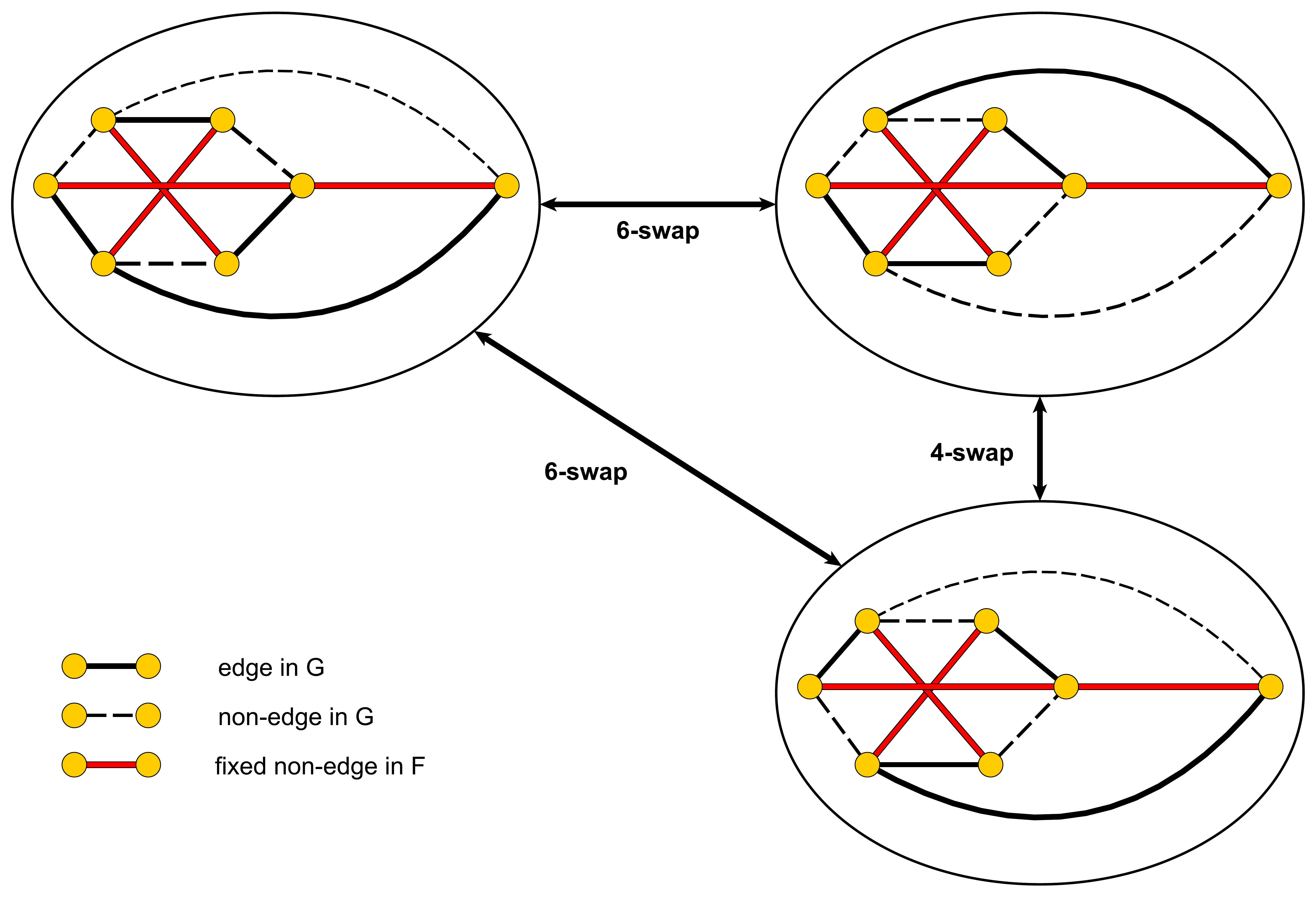}
\caption{State graph for $F$-fixed problem with sequence $S=(1,1,1,1),(2,1,1)$, all elements in $F$ are non-edges.}
\label{fig:counter_example}
\end{figure}

\section{Instructional Summary}

We want to summarize our results in giving an instruction for sampling $H$-fixed realisations for a sequence $S.$ Given a bipartite graph $G=(U,V,E)$ with fixed edges $F_E$ and non-edges $F_N,$ i.e. $F_E \subset E,$ and $F_N \cap E = \emptyset.$ We apply the following steps to sample an $H$-fixed realisation.
\begin{enumerate}
\item Determine for vertex degree sequence $S=(a_1,\dots,a_n),(b_1,\dots,b_n)$ of $G$ set $F'$ of all edges and non-edges which occur in \textbf{every} realisation. This can be done in polynomial time with the repeated use of the Gale-Ryser theorem, see the description after Corollary~\ref{Cor:Curveball_F_fixed}.
\item Update the fixed sets of edges and non-edges in $G$ to set $F:=H \setminus F'.$
\item If set $F$ does not contain a matching of size three then it is sufficient to use $4$-swaps in a Markov chain. An $F$-ignoring swap chain or  Curveball algorithm can be used to sample (after Theorem~\ref{theorem:connectedness_F_Fixed_realizations_swaps}).
\item Else, if set $F$ does not contain a cycle of length $8$, then it is sufficient to use $4$-swaps and $6$-swaps in a suitable Markov chain. A Curveball algorithm with circle trades which is defined above Theorem~\ref{theorem:connectedness_F_F'_Fixed_realizations_eight_cycles} can be used to sample.
\item Else, if in all other cases one can try to determine if $F$ does not contain a cycle of length $10,12,\dots,2k$ such that $k$ is not too large. In this case swapping of alternating cycles with shorter length than $2k$ delivers a sampling algorithm. This algorithm can be applied depending on the size of $k$ and the size of realisations. 
\end{enumerate}

We propose in future endeavours to do a lot of experimental work. Whereas we cannot answer how efficient these methods work for sampling $H$-fixed realisations in theory, we should find out in experiments if several metrics, which were proposed for stopping rules see~\cite{Aldous86} converge slower or faster than for the corresponding realisation problem without the fixed set $F$ of edges and non-edges. It will also be interesting to observe how sets $H$ look like in real world applications to see 'how far we have come' in practice with these new algorithms.


\begin{thebibliography}{}

\bibitem[Ald86]{Aldous86}
David Aldous, \emph{Shuffling cards and stopping times}, In Proceedings of the
  43rd IEEE Conference on Decision and Control, Academic Press, 1986.

\bibitem[BBV07]{BezakovaBhatnagarVigoda07}
I.~Bez{\'a}kov{\'a}, N.~Bhatnagar, and E.~Vigoda, \emph{{Sampling binary
  contingency tables with a greedy start}}, Random Structures and Algorithms
  \textbf{30} (2007), 168--205.

\bibitem[BM10]{Berger2010}
A.~Berger and M.~M{\"{u}ller-Hannemann}, \emph{Uniform sampling of digraphs
  with a fixed degree sequence}, Proceedings of the 36th International
  Conference on Graph-Theoretic Concepts in Computer Science, Springer-Verlag,
  2010, full version available as Preprint in Arxiv:0912.0685v3, pp.~220--231.

\bibitem[Car15]{CarstensPhysRevE}
C.~J. Carstens, \emph{Proof of uniform sampling of binary matrices with fixed
  row sums and column sums for the fast curveball algorithm}, Physical Review E
  \textbf{91} (2015), 042812.

\bibitem[DG95]{Diaconis1995}
P.~Diaconis and A.~Gangolli, \emph{Rectangular arrays with fixed margins},
  pp.~15--41, Springer New York, New York, NY, 1995.

\bibitem[EKMS15]{Erdoes15}
PL. Erd\H{o}s, SZ. Kiss, I.~Miklos, and L.~Soukup, \emph{Approximate counting
  of graphical realizations}, PLoS ONE \textbf{7} (2015), no.~10.

\bibitem[EMT16]{Erdos16}
P{\'{e}}ter~L. Erd{\"{o}}s, Istv{\'{a}}n Mikl{\'{o}}s, and Zolt{\'{a}}n
  Toroczkai, \emph{New classes of degree sequences with fast mixing swap markov
  chain sampling}, CoRR \textbf{abs/1601.08224} (2016).

\bibitem[Gal57]{gale1957}
David Gale, \emph{A theorem on flows in networks.}, Pacific J. Math. \textbf{7}
  (1957), no.~2, 1073--1082.

\bibitem[GG96]{gotelli1996null}
N.J. Gotelli and G.R. Graves, \emph{Null models in ecology}, Smithsonian
  Institution Press, 1996.

\bibitem[Gre15]{Greenhill:2015}
Catherine Greenhill, \emph{The switch markov chain for sampling irregular
  graphs: Extended abstract}, Proceedings of the Twenty-Sixth Annual ACM-SIAM
  Symposium on Discrete Algorithms, SODA '15, SIAM, 2015, pp.~1564--1572.

\bibitem[JSV04]{JerrumSinclairVigoda04}
M.~Jerrum, A.~Sinclair, and E.~Vigoda, \emph{{A polynomial-time approximation
  algorithm for the permanent of a matrix with nonnegative entries}}, Journal
  of the ACM \textbf{51} (2004), 671--697.

\bibitem[KTV97]{Kannan97}
R.~Kannan, P.~Tetali, and S.~Vempala, \emph{{Simple Markov-chain algorithms for
  generating bipartite graphs and tournaments (Extended Abstract)}}, 1997.

\bibitem[LPW06]{LevinPeresWilmer2006}
David~A. Levin, Yuval Peres, and Elizabeth~L. Wilmer, \emph{{Markov chains and
  mixing times}}, American Mathematical Society, 2006.

\bibitem[MES13]{Miklos13}
Istv{\'{a}}n Mikl{\'{o}}s, P{\'{e}}ter~L. Erd{\"{o}}s, and Lajos Soukup,
  \emph{Towards random uniform sampling of bipartite graphs with given degree
  sequence}, Electr. J. Comb. \textbf{20} (2013), no.~1, P16.

\bibitem[Pet91]{Petersen1891}
J.~Petersen, \emph{{Die Theorie der regul\"aren graphs}}, Acta Mathematica
  \textbf{15} (1891), 193--220.

\bibitem[RB16]{Rechner2016}
S.~Rechner and A.~Berger, \emph{Marathon: An open source software library for
  the analysis of markov-chain monte carlo algorithms}, PLoS ONE 11 (2016),
  no.~1, e0147935.

\bibitem[RJB96]{Rao1996}
A.~R. Rao, R.~Jana, and S.~Bandyopadhyay, \emph{A {M}arkov chain {M}onte
  {C}arlo method for generating random (0, 1)-matrices with given marginals},
  Sankhya: The Indian Journal of Statistics, Series A \textbf{58} (1996),
  225--242.

\bibitem[Rys57]{ryser1957}
H.~J. Ryser, \emph{Combinatorial properties of matrices of zeros and ones.},
  Canad. J. Math. \textbf{9} (1957), 371--377.

\bibitem[Sch03]{schrijver2003}
A.~Schrijver, \emph{Combinatorial optimization: Polyhedra and efficiency},
  Algorithms and Combinatorics, vol.~1, Springer, 2003.

\bibitem[SNB14]{Strona2014b}
G.~Strona, D.~Nappo, F.~Boccacci, S.~Fattorini, and J.~San-Miguel-Ayanz,
  \emph{A fast and unbiased procedure to randomize ecological binary matrices
  with fixed row and column totals}, Nature Communications \textbf{5} (2014),
  no.~4114.

\bibitem[Ver08]{Verhelst2008}
Norman~D. Verhelst, \emph{An efficient MCMC algorithm to sample binary matrices
  with fixed marginals}, Psychometrika \textbf{73} (2008), no.~4, 705--728.

\end{thebibliography}
\end{document}